\newtheorem{theorem}{Theorem}[section]
\newtheorem{lemma}[theorem]{Lemma}
\newtheorem{remark}{Remark}
\newcommand{\argmin}{\mathop{\mbox{arg\,min}}}
\def\bra{\langle}
\def\ket{\rangle}
\def\e{\epsilon}
\def \ba {\begin {eqnarray*} }
\def \ea {\end {eqnarray*} }
\def \beq {\begin {eqnarray}}
\def \eeq {\end {eqnarray}}
\newcommand{\gvec}{\mathbf{g}}
\newcommand{\svec}{\mathbf{s}}
\newcommand{\evec}{\mathbf{e}}
\newcommand{\vvec}{\mathbf{v}}
\newcommand{\mvec}{\mathbf{m}}
\newcommand{\A}{\mathcal{A}}
\newcommand{\R}{\mathbb{R}}
\newcommand{\Regul}{\mathcal{R}}
\newcommand{\Sep}{\mathcal{S}}
\newcommand{\Radon}{A}
\title{Material-separating regularizer for \\multi-energy X-ray tomography }
\author{Jacek Gondzio, Matti Lassas,\\ Salla-Maaria Latva-Äijö, Samuli Siltanen, Filippo Zanetti}
\begin{document}

\maketitle

\begin{abstract}
    Dual-energy X-ray tomography is considered in a context where the target under imaging consists of two distinct materials. The materials are assumed to be possibly intertwined in space, but at any given location there is only one material present. Further, two X-ray energies are chosen so that there is a clear difference in the spectral dependence of the attenuation coefficients of the two materials. A novel regularizer is presented for the inverse problem of reconstructing separate tomographic images for the two materials. A combination of two things, (a) non-negativity constraint, and (b) penalty term containing the inner product between the two material images, promotes the presence of at most one material in a given pixel. A preconditioned interior point method is derived for the minimization of the regularization functional. Numerical tests with digital phantoms suggest that the new algorithm outperforms the baseline method, Joint Total Variation regularization, in terms of correctly material-characterized pixels. While the method is tested only in a two-dimensional setting with two materials and two energies, the approach readily generalizes to three dimensions and more materials. The number of materials just needs to match the number of energies used in imaging. 
\end{abstract}



\section{Introduction}

Consider a physical object consisting of two different materials. It might be a machine part manufactured as a metal-plastic composite, or a fragile cultural heritage object unearthed at an archaeological site, or a two-phase fluid flow inside a process industry pipeline at a given time instant. We are interested in using X-ray tomography as a means of nondestructive testing to find out how the two materials are intertwined. To this end, we introduce a {\it novel regularization method} for dual-energy X-ray tomography for material decomposition and propose a {\it specialized interior point method} to solve the underlying optimization problem. 

We restrict here to the intersection of the object with a  two-dimensional square $\Omega\subset\R^2$. The measured X-rays thus are assumed to travel in the plane determined by $\Omega$; one can then stack several 2D reconstructions to achieve a 3D reconstruction. This restriction is only for simplicity of exposition and computation; our methods do generalize to higher dimensions.

We discretize $\Omega$  into $N\times N$ square-shaped pixels. 
There are two unknowns: non-negative $N\times N$ matrices  $G^{(1)}$ and $G^{(2)}$ modelling the distributions of material 1 and material 2, respectively. The number $G^{(\ell)}_{i,j}\geq 0$ represents the concentration of material $\ell$ in pixel $(i,j)$, where $i$ is row index and $j$ is column index. In numerical computations we represent the elements of the pair of material matrices $(G^{(1)},G^{(2)})\in(\R^{N\times N})^2$, as a vertical vector 
$$
\gvec = \left[\!\!\begin{array}{l}\gvec^{(1)}\\\gvec^{(2)}\end{array}\!\!\right]\in \R^{2N^2}.
$$

We consider recording X-ray transmission data with two different energies, low and high, resulting in two $M$-dimensional data vectors called $ \mvec^L$ and $\mvec^H$. The low-energy measurement is given by
\begin{equation}\label{measmodel_low}
    \mvec^L = c_{11}\Radon^L\gvec^{(1)} + c_{12}\Radon^L\gvec^{(2)},
\end{equation}
as both materials attenuate the low-energy X-rays with individual strengths described by the constants $c_{11}>0$ and $c_{12}>0$. Note that empirical values of $c_{11}$ and $c_{12}$ can be found by measuring pure samples of each of the two known materials. The $M{\times}N^2$ matrix $A^L$ encodes the geometry of the tomographic measurement in a standard way \cite[Section 2.3.4]{mueller2012linear}; it contains path lengths of X-rays traveling inside the pixels in $\Omega$. We have $M = r_0P$ with $P$ the number of projection directions and $r_0$ the amount of detector elements in the one-dimensional line camera.

Analogously we get for the high-energy measurement
\begin{equation}\label{measmodel_high}
    \mvec^H = c_{21}\Radon^H\gvec^{(1)} + c_{22}\Radon^H\gvec^{(2)},
\end{equation}
where the geometric system matrix $\Radon^H$ is possibly different from $\Radon^L$. See Figure \ref{fig:arrangement} for examples of imaging geometries. Again, $c_{21}>0$ and $c_{22}>0$ can be determined empirically. 

\begin{figure}
    \begin{picture}(200,170)
    \put(10,10){\includegraphics[width=5cm]{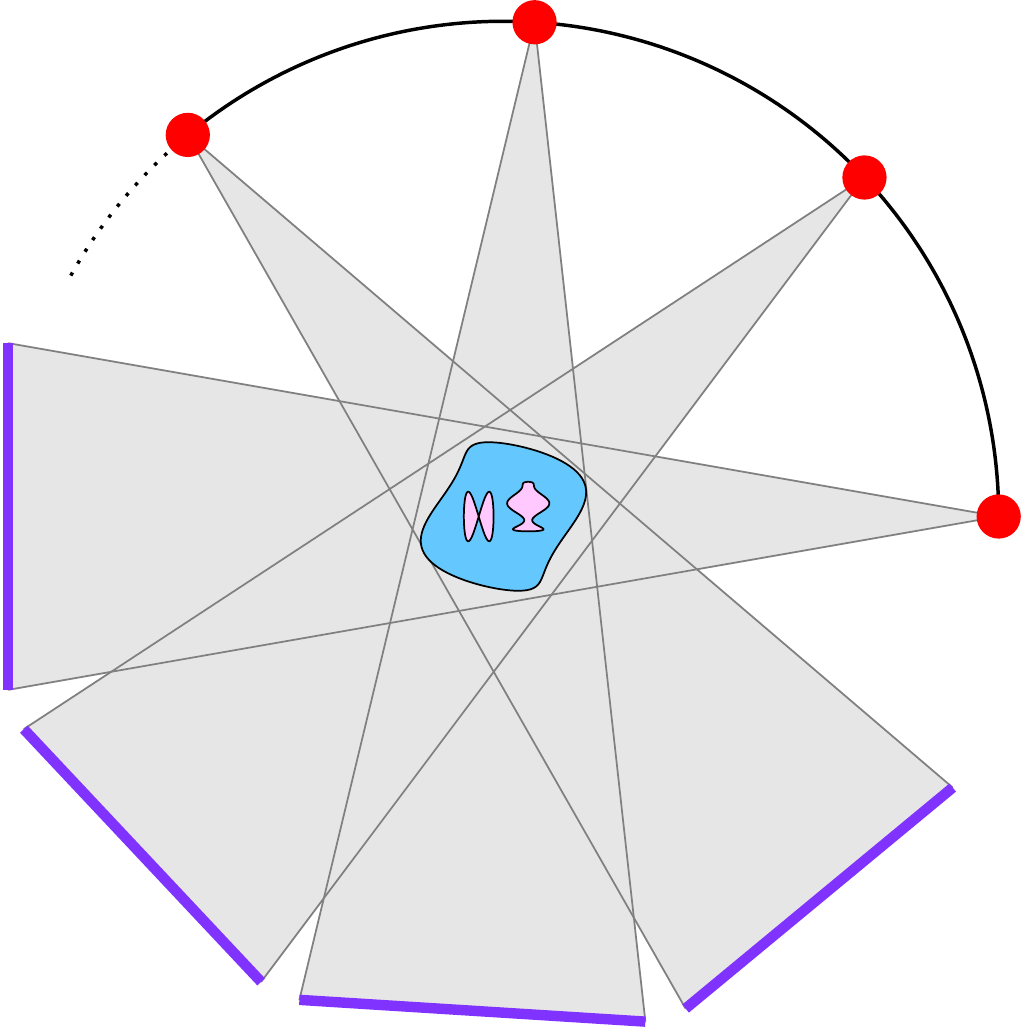}}
    \put(0,165){(a)}
    \put(155,80){1:LH}
    \put(135,128){2:LH}
    \put(75,157){3:LH}
    \put(7,139){4:LH}
    \put(220,10){\includegraphics[width=5cm]{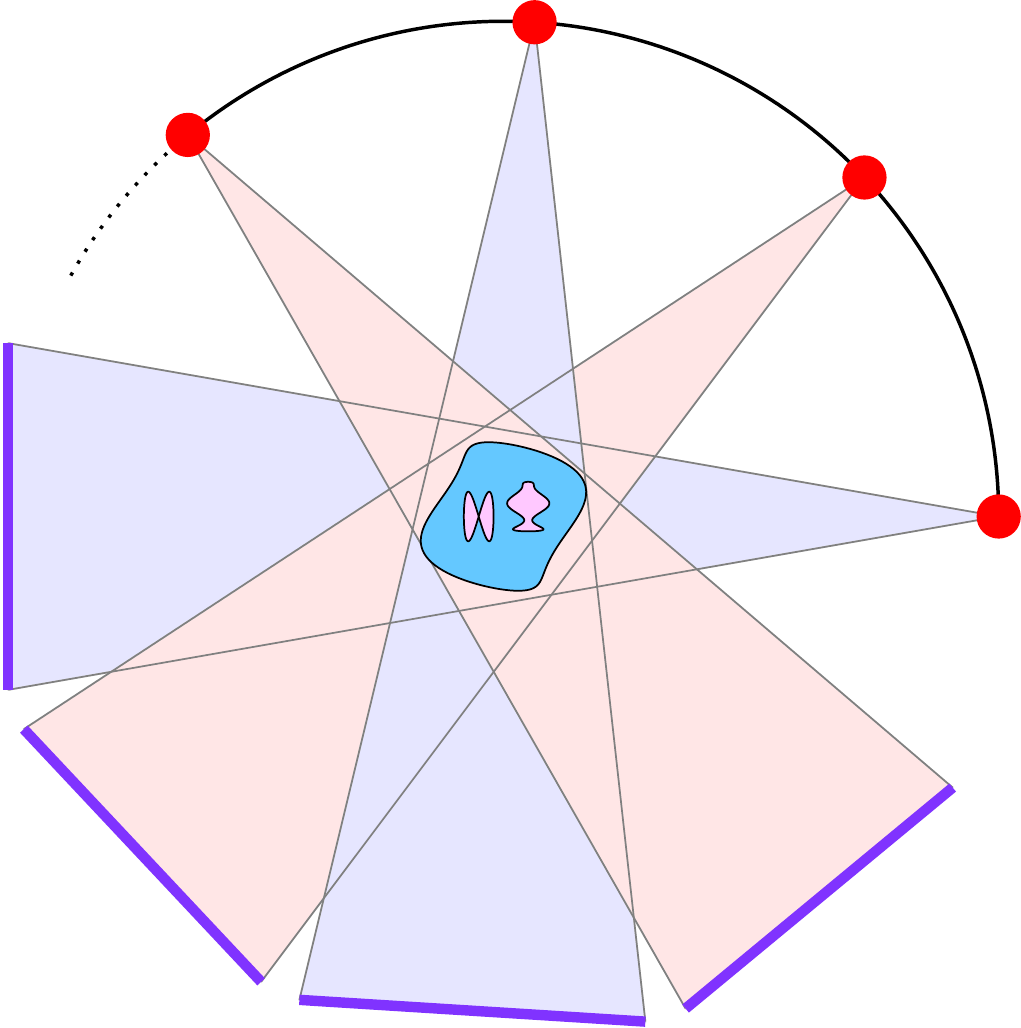}}
    \put(210,165){(b)}
    \put(365,80){1:H}
    \put(345,128){2:L}
    \put(285,157){3:H}
    \put(225,139){4:L}
    \end{picture}
    \caption{Alternative imaging protocols. (a) Two projection images are recorded from each source location: one with low (L) and another with high (H) energy. In this case we have  $\Radon^H=\Radon^L$. (b) Only one projection image is recorded at every source location, alternating between low and high energies. In this case we have  $\Radon^H\not=\Radon^L$.}
    \label{fig:arrangement}
\end{figure}

Now we can combine both measurements in a unified linear system:
\begin{equation}\label{unifiedsystem}
    \mvec=\left[\!\!\begin{array}{l}\mvec^L\\\mvec^H\end{array}\!\!\right]
    =\begin{bmatrix}
c_{11}\Radon^L &c_{12}\Radon^L\\
c_{21}\Radon^H &c_{22}\Radon^H\\
\end{bmatrix}
\left[\!\!\begin{array}{l}\gvec^{(1)}\\\gvec^{(2)}\end{array}\!\!\right]=\A \gvec.
\end{equation}
The core idea in dual-energy X-ray tomography for material decomposition is to choose the two energies so that the two materials respond to them differently. For example, one material might be quite indifferent to the energy change, while the other could attenuate very differently according to energy. Then the solution of (\ref{unifiedsystem}) is rather analogous to solving a system of two linear equations for two variables.

We propose a novel variational regularization approach in the space $\gvec\in \R^{2N^2}$, including a non-negativity constraint:
\begin{equation}\label{variational_cont}
\widetilde{\gvec}_{\alpha,\beta} 
= \argmin_{\gvec^{(j)}\geq 0}
\left\{\|\mvec-\A \gvec\|_2^2 + \alpha\Regul(\gvec) + \beta\Sep(\gvec) \right\}, 
\end{equation}
where $\alpha,\beta>0$ are regularization parameters, $\gvec^{(j)}\geq 0$ means that the elements of the vector are non-negative numbers and the regularizer $\Regul$ can be any of the standard choices such as the Tikhonov penalty
\begin{equation}\label{tikhonov_penalty}
    \Regul(\gvec) = \|\gvec\|_2^2. 
\end{equation}
%
%
The novelty arises from the term that penalises the Inner Product (IP) of $g^{(1)}$ and $g^{(2)} \in \R^{N^2}$:
\begin{equation}\label{materialsep_penalty}
\Sep(\gvec) = \Sep(\left[\!\!\begin{array}{l}\gvec^{(1)}\\\gvec^{(2)}\end{array}\!\!\right]) := 2\langle \gvec^{(1)},\gvec^{(2)}\rangle
= 2\sum_{i=1}^{N^2} \gvec^{(1)}_{i} \gvec^{(2)}_{i}.
\end{equation}
Together with the non-negativity constraint, $\Sep$ promotes the point-wise separation of the two materials: {\it at each pixel, at least one of the images, $G^{(1)}$ or $G^{(2)}$, needs to have a zero value to make $\Sep$ minimal.} Due to the presence of the inner product, we denote this approach as IP method.

The quadratic program resulting from the application of the novel variational regularization is solved using an Interior Point Method \cite{wright,gondzio_25}; we develop an efficient preconditioner for the normal equations which guarantees a spectrum of the preconditioned matrix to remain independent of the IPM iteration. The numerical experience indicates that this approach allows us to solve the largest problem ($N$=512) in a matter of minutes on a standard laptop.

We demonstrate the feasibility of our new approach to material decomposition with computational experiments. Our specific focus is in low-dose imaging, and therefore we consider imaging with only 65 projection directions. This is roughly one order of magnitude less than in standard tomographic scans. Also, we add simulated noise to the measurements for modelling low-dose exposures. As the baseline method for comparison of reconstruction quality we pick the Joint Total Variation Regularization (JTV), which has been used for dual-energy X-ray CT in \cite{Toivanen2020}. 

We find that under traditional image quality measures, such as square norm error, SSIM or HaarPSI, neither of the two methods show clear superiority over the other. However, when we look at the number of pixels where the materials are correctly identified, our new method outperforms JTV.

For simplicity we restrict here to the case of two materials and two X-ray energies. However, the model readily generalizes to higher numbers of both, as long as there are at least as many energies as there are materials. Moreover, we only consider a two-dimensional slice to be imaged using a one-dimensional linear array X-ray detector. A similar problem could be formulated for 3D objects imaged in cone-beam geometry with a planar X-ray camera; the changes are mathematically straightforward but computationally 
heavy. In an initial feasibility study like this we find it better to stick with numerically straightforward 2D scenarios. 

The paper is organized as follows. In Section \ref{sec:contform} we discuss the continuous theory behind our discrete variational regularization method. Section \ref{sec:IPM} is devoted to presenting an efficient numerical optimization method tailored for finding the minimizer of (\ref{variational_cont}). In Section \ref{sec:JTV} we recall the formulation of the Joint Total Variation regularization approach. In Section \ref{sec:methods} we describe the testing environment and in Section \ref{sec:results} we report numerical results of applying two methods: the new proposed IP regularization and the standard JTV regularization used to analyse several test images. Additionally, we briefly illustrate the behaviour of the preconditioned conjugate gradient, the specialized linear solver applied by the interior point method used to optimize the IP regularization problem. Finally, we summarize our findings in Section \ref{sec:discussion}.

\section{Continuous form of the new regularization functional}\label{sec:contform}

In many inverse problems there is an accurate continuous model for the measurement process. Regularized inversion methods can then be designed and analyzed in infinite-dimensional function spaces \cite{engl1996regularization,mueller2012linear,schuster2012regularization}.

Tomography is a prime example. Given a well-behaving function $f:\Omega\rightarrow \R$, the Radon transform $Rf$ organizes the set of all possible line integrals of $f$:
$$
 Rf(\theta,s) = \int_{x\cdot\theta=s}f(x) dL,
$$
where $\theta\in\R^2$ is a unit vector, $s\in\R$, and $dL$ stands for the one-dimensional Lebesgue measure on the line $x\cdot\theta=s$. Homogenising the molecular scale, we can use a non-negative function $f$ as a model of X-ray attenuation inside a physical object. Further, a logarithmically transformed pixel value in an X-ray camera approximates $Rf(\theta,s)$ with $\theta$ and $s$ determined by the path of the ray hitting the pixel \cite{natterer2001mathematics,mueller2012linear}. 

In practical inverse problems, the unknown needs a finite representation to be used in computational reconstruction. For example, in this work we pixelize $\Omega$, represent $f$ computationally as a function having a constant value on each pixel, and use a pencil-beam model to arrive at the model (\ref{variational_cont}).

Ideally, practical reconstructions can bee seen as discrete approximations of the regularized inversion results described by the continuous theory. This is a great situation as the theorems concerning the continuous model cover all discrete resolutions in one go, providing discretization-invariance for the inversion approach. 

However, the relationship between discrete and continuous inversion models is not always straightforward. For example, in \cite{lassas2004can} two of the authors showed that the total variation prior depends on the discretization in an unexpected and harmful way when used in Bayesian inversion. A discretization-invariant theory was developed using wavelets in \cite{Lassas2009}. Also, the usual assumption of discrete white noise in the practical data requires careful treatment at the infinite-dimensional limit  \cite{kekkonen2016posterior}.

With those caveats in mind, we feel that it is important to provide our new discrete regularization method with a rigorous continuum limit. 

Let  $L^2_+(\Omega)=\{g\in L^2(\Omega):\ g(x)\geq 0\hbox{ a.e.}\}$ and
$g(x)=(g_1(x),g_2(x))\in L^2_+(\Omega)^2$, $\mathcal H$ be a Hilbert space and 
$A:L^2(\Omega)^2\to \mathcal H$ be a bounded linear operator (such as the Radon transform).


We consider the minimization problem
\begin{equation}\label{variational continuous}
\widetilde{g}_{\alpha,\beta} 
= \argmin_{g\in  L^2_+(\Omega)^2}
\left\{\|m-A g\|_{ \mathcal H}^2 + \alpha\Regul(g) + \beta\Sep(g) \right\}, 
\end{equation}
where $\alpha>\beta>0$ are regularization parameters,
and
\begin{equation}\label{tikhonov_penalty continuous}
    \Regul(g) = \int_\Omega(|g_1(x)|^2+|g_2(x)|^2)dx
\end{equation}
and
\begin{equation}\label{materialsep_penalty continuous}
\Sep(g) = \int_\Omega g_1(x)g_2(x)\, dx. 
\end{equation}

Let $U({j,N})\subset \Omega$, $j=1,2,\dots,N$ be  disjoint sets such that $\bigcup_{j=1}^NU({j,N})=\Omega$ and
$\hbox{diam}(U({j,N}))\to 0$ as $N\to \infty$. Let ${\bf 1}_{U({j,N})}(x)=1$ for $x\in U({j,N})$ and
${\bf 1}_{U({j,N})}(x)=0$ for $x\not \in U({j,N})$. In the context of problem (\ref{variational_cont}), the interior of each $U({j,N})$ coincides with the interior of one of the pixels in our discretization of $\Omega$.

Then
$$
\phi_{j,N}(x)=|{U({j,N})}|^{-1/2}{\bf 1}_{U({j,N})}(x),\quad j=1,2,\dots,N,
$$ 
{\color{black}where $|\cdot |$ denotes Lebesgue measure,}
are orthogonal  piecewise constant functions. Let 
$\mathcal P_N\subset L^2(\Omega)$ be the span of
the functions $\phi_{j,N}(x)$, $j=1,2,\dots,N$
and
$$
P_Nu=\sum_{j=1}^N \bra u,\phi_{j,N}\ket_{L^2(\Omega)}\phi_{j,N}
$$
be the orthogonal projector in $L^2(\Omega)$ onto $\mathcal P_N$.
For $g=(g_1,g_2)\in L^2(\Omega)^2$
we denote $P_Ng=(P_Ng_1,P_Ng_2)$.

When $\Omega\subset \R^2$ is the unit square and the interiors of $U({j,N})$ coincide with the interiors of our pixels, the minimizer $\tilde \gvec_{\alpha,\beta}$ defined in (\ref{variational_cont}) corresponds
to a piecewise constant function that solves the minimization problem
\begin{eqnarray}\label{discretized problem 2}
\min_{g\in Y\cap\mathcal P_N^2}\overline F(g),\quad \overline F(g)=\|m-A g\|_{ \mathcal H}^2 + \alpha\Regul(g) + \beta\Sep(g).
\end{eqnarray}
As $\overline F:Y\cap\mathcal P_N^2 \to \R$ is a strictly convex
function and $\mathcal P_N^2$ is a finite dimensional vector space, we see that
$\overline F:Y\cap\mathcal P_N^2 \to \R$ has a unique minimizer.

To study an analogous continuous problem, let
 $F:L^2(\Omega)^2\to \R\cup\{\infty\}$ 
be the function
\begin{eqnarray*}
F(g)&=&\|m-A g\|_{ \mathcal H}^2 + \alpha\Regul(g) + \beta\Sep(g)+\chi_{L^2_+(\Omega)^2}(g),
\end{eqnarray*}
where $\chi_{L^2_+(\Omega)^2}(g)=0$ if $g\in L^2_+(\Omega)^2$ and
$\chi_{L^2_+(\Omega)^2}(g)=\infty$ if $g\not \in L^2_+(\Omega)^2$. 

To study the convergence of the discrete problems,
we define also an auxiliary function
$F_N:L^2(\Omega)^2\to \R\cup\{\infty\}$,
\begin{eqnarray*}
F_N(g)&=&\|m-A P_Ng\|_{ \mathcal H}^2 + \alpha\Regul(g) + \beta\Sep(P_Ng)+\chi_{L^2_+(\Omega)^2}(g).
\end{eqnarray*}

Let $Y=L^2(\Omega)^2$ and $\tau_Y$ be the norm topology of $Y$ and
$\tau_w$ be the weak topology of $Y$.
Consider now a sequence $y_N\in Y$ that converges weakly in $Y$ to $y$.
As $A:Y\to  \mathcal H$ is bounded, and thus $A^*: \mathcal H\to Y$ is bounded, we see
that $Ay_N$ converges weakly in $\mathcal H $ to $Ay$.
Thus, as the norm of a Hilbert space, $\|\cdot\|_{\mathcal H}$, is a weakly lower-semicontinuous function, we see that  $F:Y\to \R\cup\{\infty\}$ is lower-semicontinuous in $(Y,\tau_w)$. 

As $F:Y\to \R\cup\{\infty\}$ is a strictly convex lower-semicontinous function in $(Y,\tau_w)$,
it has a unique minimizer. 
Similarly,  $F_N:Y\to \R\cup\{\infty\}$ 
 has a unique minimizer. 
Moreover, we see that if $g^*_N\in Y$ is a minimizer
of $F_N:Y\to \R\cup\{\infty\}$, then $g^*_N\in\mathcal P_N^2$.
As $F_N(P_Ng)\leq F_N(g)$, we see that
 the minimizer of $F_N$
satisfies $g^*_N\in\mathcal P_N^2$.

Next, we recall the definition of the $\Gamma$-convergence.
Let $(Y,\tau)$ be a topological space and $\{\mathcal{F}_{N}:Y\to [-\infty,\infty],\,N>0\}$ be a 1-parameter family of functionals on $Y$. For $y\in Y$ let $N(x)$ denote the set of all open neighbourhoods $U\subset Y$ of $x$, with respect to the topology $\tau$. If 
\[
\mathcal{F}(x)= \sup_{U\in N(x)}\liminf_{N\to \infty}\inf_{y\in U} \mathcal{F}_{N} (y)= \sup_{U\in N(x)}\limsup_{N\to \infty}\inf_{y\in U} \mathcal{F}_{N}(y),
\]
we say that \emph{$\mathcal{F}_N$ $\Gamma$-converges to $\mathcal{F}$} in $Y$ with respect to  topology $\tau$ as $N\to \infty$. 

\begin{theorem}\label{thm:discretecont}
Let $\alpha>\beta$ and $g^*_N\in Y$ be the minimizers of functions $F_N$ and $g^*\in Y$ be the minimizer of $F$.
Then
\begin{eqnarray}
\lim_{N\to \infty } \|g^*_N-g^*\|_Y=0.
\end{eqnarray}

\end{theorem}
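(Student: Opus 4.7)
The plan is to prove the theorem by $\Gamma$-convergence with respect to the weak topology $\tau_w$ on $Y=L^2(\Omega)^2$, combine this with an equicoercivity argument to obtain weak convergence of the minimizers, and finally upgrade that weak convergence to norm convergence using the structure of the regularizer.

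The pivotal structural observation is that the combined regularizer $Q(g):=\alpha\Regul(g)+\beta\Sep(g)$ is a positive-definite quadratic form on $Y$: pointwise it is the quadratic form with matrix $\left(\begin{smallmatrix}\alpha&\beta/2\\ \beta/2&\alpha\end{smallmatrix}\right)$, whose eigenvalues $\alpha\pm\beta/2$ are both strictly positive by the hypothesis $\alpha>\beta>0$. Hence $Q$ is convex and strongly continuous, therefore weakly l.s.c., and coercive with $Q(g)\geq(\alpha-\beta/2)\|g\|_Y^2$. This one fact replaces all individual analysis of $\Sep$ and will drive both the $\Gamma$-convergence step and the final upgrade to norm convergence.

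For the lim-inf inequality, suppose $g_N\rightharpoonup g$ weakly in $Y$. A standard adjoint argument (splitting $\bra P_Ng_N-g,\phi\ket=\bra g_N-g,P_N\phi\ket+\bra g,P_N\phi-\phi\ket$ and using $P_N\to I$ strongly together with weak-sequence boundedness from the Uniform Boundedness Principle) gives $P_Ng_N\rightharpoonup g$, whence $AP_Ng_N\rightharpoonup Ag$ in $\mathcal H$ and $\liminf\|m-AP_Ng_N\|_{\mathcal H}^2\geq\|m-Ag\|_{\mathcal H}^2$. The indicator $\chi_{L^2_+(\Omega)^2}$ is weakly l.s.c.\ because $L^2_+(\Omega)^2$ is norm-closed and convex; since $P_N$ is an orthogonal projection,
\[
\alpha\Regul(g_N)+\beta\Sep(P_Ng_N)\geq\alpha\Regul(P_Ng_N)+\beta\Sep(P_Ng_N)=Q(P_Ng_N),
\]
and weak l.s.c.\ of $Q$ yields $\liminf Q(P_Ng_N)\geq Q(g)$. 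Summing gives $\liminf F_N(g_N)\geq F(g)$. As recovery sequence I take $g_N=P_Ng$: cell averages of a non-negative function are non-negative, and $g_N\to g$ in norm since the $\phi_{j,N}$ are total as the partition diameter tends to zero. Strong convergence makes every term of $F_N(P_Ng)$ converge to the corresponding term of $F(g)$, so $\limsup F_N(g_N)\leq F(g)$.

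Equicoercivity is immediate: plugging the recovery sequence into the minimizer inequality gives $F_N(g^*_N)\leq F_N(P_Ng^*)\to F(g^*)$, so $(\alpha-\beta/2)\|g^*_N\|_Y^2\leq F_N(g^*_N)$ stays uniformly bounded. The standard $\Gamma$-convergence theorem for minimizers, together with uniqueness of $g^*$, then gives $g^*_N\rightharpoonup g^*$ weakly and $F_N(g^*_N)\to F(g^*)$. Because $g^*_N\in\mathcal P_N^2$ forces $P_Ng^*_N=g^*_N$, we have $F_N(g^*_N)=F(g^*_N)$, and the elementary splitting lemma (if $a_N+b_N\to a+b$ with $\liminf a_N\geq a$ and $\liminf b_N\geq b$, then $a_N\to a$ and $b_N\to b$) applied to the three weakly l.s.c.\ summands shows that each term converges individually; in particular $Q(g^*_N)\to Q(g^*)$. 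Writing $Q(h)=\bra Th,h\ket_Y$ for the associated bounded self-adjoint $T\geq(\alpha-\beta/2)I$ and expanding $Q(g^*_N-g^*)=Q(g^*_N)-2\bra Tg^*_N,g^*\ket+Q(g^*)$, the cross term converges to $2Q(g^*)$ by weak convergence, so $Q(g^*_N-g^*)\to 0$; coercivity of $Q$ then concludes $\|g^*_N-g^*\|_Y\to 0$.

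The main obstacle is that $\Sep$ is neither weakly l.s.c.\ nor weakly continuous in isolation, because it is a bilinear product of two only-weakly-convergent factors; any direct attempt to control $\Sep(P_Ng_N)$ from $g_N\rightharpoonup g$ fails. The resolution is to never treat $\Sep$ separately: the hypothesis $\alpha>\beta>0$ makes the combined form $Q$ genuinely convex and coercive, which is exactly what is needed both to close the $\Gamma$-liminf step and to upgrade the weak convergence of the minimizers to norm convergence.
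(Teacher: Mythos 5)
Your proof is correct and follows the same overall strategy as the paper: establish $\Gamma$-convergence of $F_N$ to $F$ plus equicoercivity, deduce convergence of the minimum values and (via uniqueness of $g^*$) weak convergence of the minimizers, and then upgrade to norm convergence by exploiting the strong positivity of the quadratic part. The execution differs in two worthwhile ways. First, you prove the liminf and recovery-sequence inequalities by hand in the weak topology, bundling $\alpha\Regul+\beta\Sep$ into a single coercive quadratic form $Q$ (using $P_N$ orthogonal to pass from $\Regul(g_N)$ to $\Regul(P_Ng_N)$, and weak lower semicontinuity of the convex form $Q$), whereas the paper works in the \emph{norm} topology, invokes Dal Maso's abstract results (pointwise convergence of equibounded convex functionals implies $\Gamma$-convergence, stability under adding an l.s.c.\ indicator, and Theorem 7.8 for the minima), and then argues weak convergence of $g^*_N$ separately by a contradiction/Banach--Alaoglu argument. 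Second, you replace the citation to the Legendre-form property of strongly positive quadratic forms by a direct expansion $Q(g^*_N-g^*)=Q(g^*_N)-2\langle Tg^*_N,g^*\rangle+Q(g^*)\to 0$, which is a clean self-contained substitute. One point to tighten: the ``standard $\Gamma$-convergence theorem for minimizers'' is usually stated for metrizable (or first-countable) topologies, and $\tau_w$ is not metrizable on all of $Y$; but since your equicoercivity step confines everything to a bounded set, the explicit subsequence argument (Banach--Alaoglu, liminf inequality, uniqueness of $g^*$) that you already have in hand closes this without appeal to the abstract theorem, so the gap is cosmetic rather than substantive. You also implicitly use $g^*_N\in\mathcal P_N^2$, which is justified in the paper's setup preceding the theorem ($F_N(P_Ng)\le F_N(g)$ with $P_N$ preserving non-negativity), and should be flagged as an ingredient if the proof is to stand alone.
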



\begin{proof}

Let us first recall the reason why the projectors $P_N$ converge 
strongly to the identity operator in $Y$  as $N\to \infty$. Let $g\in Y$ and $\epsilon>0$.
Then there is a function $g'\in C^1(\overline \Omega)^2$ such that
$\|g-g'\|_Y<\epsilon/4$. Then, $\|P_N(g'-g)\|_Y\leq\|g-g'\|_Y<\epsilon/4$.
Let $M=\|g'\|_{C^1}.$ When $N_0$ is so large that for all $N>N_0$
we have $\hbox{diam}(U({j,N}))<\e/(2M)$,  
we see by considering averages  of $g'$ in the sets
$U(j,N)$ that
$\|g'-P_Ng'\|_Y\leq\e/2$. Thus, for $N>N_0$ we have
$$
\|g-P_Ng\|_Y\leq  \|g-g'\|_Y+\|g'-P_Ng'\|_Y+\|P_Ng'-P_Ng\|_Y<\e.
$$
This shows that the projectors $P_N$ converge 
strongly to $I$ in $Y$  as $N\to \infty$.

\noindent Let $H,H_N:Y\to \R$ be the quadratic functions
\begin{eqnarray*}
H(g)&=&\|m-A g\|_{L^2(\Omega)^2}^2 + \alpha\Regul(g) + \beta\Sep(g)
,\\
H_N(g)&=&\|m-A P_Ng\|_{L^2(\Omega)^2}^2 + \alpha\Regul(g) + \beta\Sep(P_ng).
\end{eqnarray*}
and
 $Q,Q_N:Y\to \R$ be the quadratic forms
\begin{eqnarray*}
Q(g)&=&\|A g\|_{L^2(\Omega)^2}^2 + \alpha\Regul(g) + \beta\Sep(g)
,\\
Q_N(g)&=&\|A P_Ng\|_{L^2(\Omega)^2}^2 + \alpha\Regul(g) + \beta\Sep(P_ng).
\end{eqnarray*}
Observe that for all $g\in Y$ the values  $H_N(g)$ converge to $H(g)$
 as $N\to \infty,$ that is,  $H_N$ converges to $H$ pointwisely in $Y$.
As $H_N$ are convex and uniformly bounded in balls of $Y$, 
 \cite{DalMaso}, Proposition 5.12, implies that $H_N$ $\Gamma$-converges to $H$
 in $(Y,\tau_Y)$ as $N\to \infty.$
 Moreover, $H_N$ converges to $H$ both pointwisely and in the sense of $\Gamma$-convergence,
 and the mapping $g\mapsto \chi_{L^2_+(\Omega)^2}(g)$ is lower-semicontinuous 
 in $(Y,\tau_Y)$. Then  \cite[Propositions 5.9 and 6.25]{DalMaso} imply that
 $F_N:Y\to \R\cup\{\infty\}$ $\Gamma$-converges to $F$ in $(Y,\tau_Y)$ as $N\to \infty.$

As $Q_N(g)\geq  (\alpha-\beta) \|g\|_Y^2$, we see that the family of functions $F_N:Y\to \R\cup\{\infty\}$, $N>0,$ is equicoersive in $(Y,\tau_n)$ by  \cite{DalMaso}, Def. 7.6 and Prop 7.7. By \cite{DalMaso}, Theorem 7.8, we have
$$
F(g^*)=\min_{g\in Y} F(b)=\lim_{N\to \infty}\min_{g\in Y} F_N(b)=\lim_{N\to \infty}F_N(g^*_N).
$$
Observe that as  $g^*_N\in\mathcal P_N^2$, we have
$F_N(g^*_N)=F(g^*_N)$. As $F_N(g)\geq (\alpha-\beta) \|g\|_Y^2$, we see that
$g^*_N$ are uniformly bounded in $Y$. 

To show that $g^*_N$ converges weakly in $Y$ to $g^*$ as
$N\to \infty$, we next assume the opposite. Then, by choosing a subsequence if necessary, we can assume that there is $\e_1>0$ and $y\in Y$ such that 
\beq\label{limits are not same}
|\bra g^*_N,y\ket_Y-\bra g^*,y\ket_Y|>\e_1.
\eeq
By Banach-Alaoglu theorem, by choosing a subsequence if necessary, we can assume that $g^*_N$  converges weakly in $Y$ to some $\tilde g\in Y$.
By \eqref{limits are not same},
$\tilde g\not=g^*$.  

As  $F$ is lower-semicontinuous in $(Y,\tau_w)$,  we have that 
\beq\label{limit of values}
F(\tilde g)\leq \lim_{N\to \infty}F_N(g^*_N)=
F(g^*)=\min_{g\in Y} F(g). 
\eeq
Thus, $F(\tilde g)=F(g^*)$ and $\tilde g$ is a minimizer of  $F:Y\to \R\cup\{\infty\}$.
As the minimizer of $F$ is unique, we have $\tilde g=g^*$ which
is not possible. This shows that $g^*_N$ converges weakly in $Y$ to $g^*$.
This weak convergence, limit \eqref{limit of values} and the fact that
 $g^*_N,g^*\in L^2_+(\Omega)^2$ implies that
\beq\label{limit of values forms}
 \lim_{N\to \infty}Q_N(g^*_N)=Q(g^*). 
\eeq


Observe that $Q:Y\to \R$ is a strongly positive 
quadratic form, that is, $Q(g)\geq (\alpha-\beta)\|g\|_Y^2$ and $\alpha-\beta>0.$
Thus, by \cite{Martin}, Def. 1.1 and property P5 (see also \cite{Hestenes}), the quadratic form $Q:Y\to \R$ is a Legendre form
and it has the property
that if $y_N\to y$ in the weak topology of $Y$
and $Q(y_N)\to Q(y)$ as $N\to \infty,$
then  $y_N\to y$ in the norm topology of $Y$.
Above we have seen that
 $g^*_N$ converges weakly to $g^*$  in $Y$  and 
 the limit 
 \eqref{limit of values forms} holds.
As $Q$ is a Legendre form this implies that
 $g^*_N$ converges in the norm topology $Y$ to $g^*$.

\end{proof}

The message of Theorem \ref{thm:discretecont} is that when we increase the resolution in problems of the form (\ref{variational_cont}), they converge towards a well-defined infinite-dimensional problem. This is a form of discretization-invariance. 

\section{Optimization with preconditioned interior point method (IPM)}\label{sec:IPM}

By combining the use of Tikhonov regularizer (\ref{tikhonov_penalty}) and the Inner Product regularizer (\ref{materialsep_penalty}), which promotes the point-wise separation of two materials, we arrive at the constrained quadratic programming task
\begin{equation}\label{variational_Tikhonov}
\argmin_{\gvec^{(j)}\geq 0}
\left\{\|\mvec-\A \gvec\|_2^2 + \alpha\|\gvec\|_2^2 + \beta\, \gvec^T L \gvec \right\},
\end{equation}
where 
$$
  L=\left[ 
  \begin{array}{cc}
      0 & I \\
      I & 0
  \end{array}\right],
$$
with four blocks of size $N^2{\times}N^2$ each. 


The problem may be written as an explicit quadratic program with inequality (non-negativity) constraints 

\begin{equation}
    \argmin_{\gvec^{(j)}\geq 0} -\mvec^T \A \gvec +\frac{1}{2} \gvec^T(Q_1+Q_2)\gvec
    \label{minproblem}
\end{equation}

where 

\begin{equation}
\small
      Q_1=\left[ 
  \begin{array}{cc}
      c^2_{11}(A^L)^T A^L+c^2_{21}(A^H)^T A^H & c_{11}c_{12}(A^L)^TA^L+c_{21}c_{22}(A^H)^TA^H \\
      c_{11}c_{12}(A^L)^TA^L+c_{21}c_{22}(A^H)^TA^H & c^2_{12}(A^L)^T A^L+c^2_{22}(A^H)^T A^H
  \end{array}\right],
  \label{matrixQ1}
\end{equation}
\begin{equation}
  Q_2=\left[ 
  \begin{array}{cc}
      \alpha I & \beta I \\
      \beta I & \alpha I
  \end{array}\right].
  \label{matrixQ2}
\end{equation}
Notice that $Q=Q_1+Q_2$ can be written as
\begin{equation}
\label{Qkronecker}
{\small Q=\begin{bmatrix} c_{11}^2 & c_{11}c_{12}\\c_{11}c_{12} & c_{12}^2\end{bmatrix}\otimes (A^L)^TA^L+\begin{bmatrix} c_{21}^2 & c_{21}c_{22}\\c_{21}c_{22} & c_{22}^2\end{bmatrix}\otimes (A^H)^TA^H+\begin{bmatrix} \alpha & \beta\\\beta & \alpha\end{bmatrix}\otimes I,}
\end{equation}
where $\otimes$ represents the Kronecker product.

Recall this important property of the Kronecker product:
\begin{lemma}
\label{kroneig}
Given two square matrices $T$ and $Z$, the eigenvalues of the Kronecker product $T \otimes Z$ are given by $t \cdot z$, where $t$ is an eigenvalue of $T$ and $z$ is an eigenvalue of $Z$.
\end{lemma}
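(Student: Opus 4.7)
The plan is to prove this classical Kronecker-product fact in two stages: first exhibit $nm$ eigenvectors with eigenvalues of the form $tz$, then argue that these exhaust all eigenvalues with correct multiplicities. The key algebraic tool in both stages is the mixed-product identity $(A\otimes B)(C\otimes D)=(AC)\otimes(BD)$, valid whenever the sizes of $A,C$ and $B,D$ are compatible.

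First I would verify the forward direction directly. If $Tu=tu$ and $Zv=zv$ with $u,v\neq 0$, then by the mixed-product identity
\[
(T\otimes Z)(u\otimes v)=(Tu)\otimes(Zv)=(tu)\otimes(zv)=tz\,(u\otimes v),
\]
and $u\otimes v\neq 0$, so $tz$ is an eigenvalue of $T\otimes Z$. This takes care of existence but not of multiplicities, and it does not yet exclude the appearance of ``extra'' eigenvalues.

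For the complete statement, my plan is to invoke Schur's triangularization. Writing $T=U_1R_1U_1^*$ and $Z=U_2R_2U_2^*$ with $U_i$ unitary and $R_i$ upper triangular carrying the eigenvalues $t_1,\dots,t_n$ of $T$ and $z_1,\dots,z_m$ of $Z$ on the diagonal, a second application of the mixed-product identity gives
\[
T\otimes Z=(U_1\otimes U_2)(R_1\otimes R_2)(U_1\otimes U_2)^*.
\]
Here $U_1\otimes U_2$ is unitary (Kronecker products of unitaries are unitary by the same identity applied to $(U_1\otimes U_2)(U_1\otimes U_2)^*$), so $T\otimes Z$ is unitarily similar to $R_1\otimes R_2$, which is upper triangular because the Kronecker product of upper triangular matrices is upper triangular. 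Reading off the diagonal of $R_1\otimes R_2$ yields precisely the $nm$ products $t_iz_j$, which therefore constitute the eigenvalues of $T\otimes Z$ with multiplicity.

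The main obstacle is bookkeeping rather than any deep issue: one must confirm that the Kronecker product of upper-triangular blocks lists its diagonal in the block pattern $(t_iR_2)_{ii}=t_iz_j$, and that Kronecker products of unitaries are unitary. Both are immediate consequences of the mixed-product identity, so I would state them briefly and cite the identity as the single workhorse of the argument.
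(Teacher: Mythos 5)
Your proof is correct and complete. Note that the paper itself offers no proof of Lemma~\ref{kroneig} at all --- it is simply ``recalled'' as a known property of the Kronecker product --- so there is no argument to compare yours against; what you give is the standard textbook treatment (the mixed-product identity for the eigenvector construction, then Schur triangularization of both factors to pin down the full multiset of eigenvalues with multiplicities and rule out extraneous ones), and it establishes strictly more than the paper needs, since the convexity and spectral-bound arguments in Section~3 only use the inclusion of the products $t_iz_j$ among the eigenvalues of the symmetric matrices involved.
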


\begin{lemma}
\label{lemmaconvex}
If $\alpha\ge\beta$, problem \eqref{minproblem} is convex.
\end{lemma}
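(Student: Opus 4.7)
The plan is to reduce convexity of \eqref{minproblem} to positive semidefiniteness of the Hessian $Q = Q_1 + Q_2$, and then verify PSD-ness term by term using the Kronecker decomposition \eqref{Qkronecker} together with Lemma \ref{kroneig}. The linear term $-\mathbf{m}^T\mathcal{A}\mathbf{g}$ is affine, and the non-negativity constraints define a convex cone, so the whole program is convex iff $\mathbf{g}^T Q \mathbf{g} \geq 0$ for all $\mathbf{g}\in\R^{2N^2}$.

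First I would handle the two ``measurement'' terms in \eqref{Qkronecker}. The $2\times 2$ coefficient matrix $\begin{bmatrix} c_{11}^2 & c_{11}c_{12}\\ c_{11}c_{12} & c_{12}^2\end{bmatrix}$ equals the rank-one outer product $(c_{11},c_{12})^T(c_{11},c_{12})$, so its eigenvalues are $0$ and $c_{11}^2+c_{12}^2\geq 0$; likewise for the coefficient matrix associated with $(A^H)^TA^H$. The Gram matrices $(A^L)^TA^L$ and $(A^H)^TA^H$ are PSD by construction. Applying Lemma \ref{kroneig}, the eigenvalues of each of the first two Kronecker terms are non-negative products of non-negative numbers, so both summands are PSD.

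Next I would treat the regularization term. The matrix $\begin{bmatrix} \alpha & \beta\\ \beta & \alpha\end{bmatrix}$ is symmetric with eigenvalues $\alpha+\beta$ and $\alpha-\beta$. The hypothesis $\alpha\geq\beta$ (with $\alpha,\beta>0$) ensures both are non-negative, so this $2\times 2$ matrix is PSD, and by Lemma \ref{kroneig} its Kronecker product with $I$ is also PSD (its eigenvalues are $\alpha\pm\beta$, each with multiplicity $N^2$).

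Finally, since a sum of PSD matrices is PSD, $Q$ is PSD, the quadratic form $\mathbf{g}^T Q \mathbf{g}$ is convex, and thus \eqref{minproblem} is a convex program. There is no real obstacle here: the argument is essentially an assembly step once the Kronecker decomposition \eqref{Qkronecker} is in hand, with the only subtlety being the need to notice that the two ``data-coupling'' $2\times 2$ coefficient matrices are rank-one outer products and hence automatically PSD regardless of the signs of the $c_{ij}$'s (so the positivity of $\alpha-\beta$ is what does the real work).
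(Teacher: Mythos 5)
Your proof is correct and follows essentially the same route as the paper: reduce convexity to positive semidefiniteness of $Q$ via the Kronecker decomposition \eqref{Qkronecker}, observe that the two data-coupling $2\times2$ coefficient matrices and the Gram matrices are automatically PSD, use Lemma~\ref{kroneig}, and note that $\alpha\ge\beta$ makes the regularization block PSD. You simply supply more detail (the rank-one outer-product observation and the explicit eigenvalues $\alpha\pm\beta$) than the paper's terser version.
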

\begin{proof}
We just need to show that matrix $Q$ in \eqref{Qkronecker} is positive semi-definite. We know that matrix 
\[\begin{bmatrix} \alpha & \beta\\\beta & \alpha\end{bmatrix}\]
is positive semi-definite if $\alpha\ge\beta$; the other matrices in the right hand side of \eqref{Qkronecker} are always positive semi-definite. Therefore, using Lemma~\ref{kroneig}, $Q$ is the sum of semi-definite matrices and is then positive semi-definite.
\end{proof}
Therefore, in the following we will always assume that $\alpha\ge\beta$.

\subsection{Interior point method formulation}
We decided to solve problem~\eqref{minproblem} using an interior point method \cite{wright,gondzio_25}: these methods are among the most efficient solvers for quadratic programs of large dimensions and can often outperform the more common first order methods in terms of speed of convergence and accuracy. For this problem we aim at reaching large dimensions, and the FISTA method \cite{fista}, already for moderate problem sizes ($N=128$), was not able to match the results of the interior point solver; we thus decided to consider only the latter in this work.

Since the problem (\ref{variational_Tikhonov}) does not involve any linear equality constraints, we can obtain a formulation that is simpler than the general one. In the following, $\evec=(1,1,\dots,1)^T$ and for any vector $\mathbf v$, we define a diagonal matrix $V=\text{diag}(\mathbf v)$. 
To apply an interior point method to (\ref{variational_Tikhonov}), 
we proceed in the usual way and start from adding a logarithmic barrier to form the Lagrangian:
\begin{equation}L(\gvec,\mu)=\frac{1}{2}\gvec^TQ\gvec-\mvec^T\mathcal A\gvec-\mu\sum_{i=1}^{2N^2}\log g_i.\label{lagrangian}\end{equation}
The coefficient $\mu$ is the centrality parameter, which guides the approximations along the central path and which is driven to zero throughout the iterations.
The gradient of~\eqref{lagrangian} is
\[\nabla_gL(\gvec,\mu)=Q\gvec-\mathcal A^T\mvec-\mu G^{-1}\evec.\]
If we define variables $\svec$ as $\mu G^{-1}\evec=S\evec$, then the optimality conditions become
\[\begin{cases}
Q\gvec-\svec=\mathcal A^T\mvec\\
GS\evec=\mu\evec\\
\gvec,\svec>0.
\end{cases}\]

The Newton step $(\Delta\gvec,\Delta\svec)$ for the previous nonlinear system can be found solving
\[\begin{bmatrix} Q & -I\\S & G\end{bmatrix}\begin{bmatrix}\Delta\gvec\\\Delta \svec\end{bmatrix}=\begin{bmatrix}\mathbf{r_1}\\\mathbf{r_2}\end{bmatrix},\]
where $\mathbf{r_1}=\mathcal A^T\mvec-Q\gvec+\svec$ and $\mathbf{r_2}=\sigma\mu\evec-GS\evec$; $\sigma$ is a coefficient that is responsible for the reduction of the parameter $\mu$ \cite{gondzio_25}. 

If we form the normal equations, we obtain the final linear system that we need to solve:
\begin{equation}
\label{normalequations}
(Q+G^{-1}S)\Delta\gvec=\mathbf{r_1}+G^{-1}\mathbf{r_2}.
\end{equation}
We can then retrieve $\Delta\svec$ as
\begin{equation}
\label{deltas}
\Delta\svec=G^{-1}(\mathbf{r_2}-S\Delta\gvec).
\end{equation}

\begin{remark}
Notice that we can use the normal equations without the need to compute the inverse of Q. This would not be possible for a general quadratic program, but here it follows from the fact that we do not have any linear equality constraint.
\end{remark}

At every IPM iteration we need to find the Newton step using \eqref{normalequations}-\eqref{deltas} and calculate the step sizes $\alpha_g$ and $\alpha_s$, so that the new point $(\gvec+\alpha_g\Delta\gvec,\svec+\alpha_s\Delta\svec)$ remains positive. We then update the centrality measure $\mu=\gvec^T\svec/2N^2$ and choose the coefficient $\sigma$ for the next iteration.

In practice, a more sophisticated method is used, which involves predictors and correctors. In particular the predictor, or affine-scaling direction, is computed solving~\eqref{normalequations} with $\sigma=0$. A sequence of correctors is then computed by solving~\eqref{normalequations} with $\mathbf{r_1}=0$ and $\mathbf{r_2}$ chosen in order to improve the centrality of the approximation, by pushing the point towards a symmetric neighbourhood \begin{equation}
N=\{(\gvec,\svec)\mid\gvec>0,\,\svec>0,\,\gamma\mu\le g_js_j\le\mu/\gamma,\,\forall j\}.
\label{neighbourhood}
\end{equation}
This technique, called multiple centrality correctors, has been analyzed in detail in  \cite{gondzio_mcc,colombo_gondzio}.

To stop the IPM iterations, we check the normalized dual residual and the complementarity measure:
\begin{equation}
\frac{\|\mathcal A^T\mvec-Q\gvec+\svec\|}{\|\mathcal A^T\mvec\|}<\texttt{tol}, \quad \mu<\texttt{tol},
\label{IPMstop}
\end{equation}
where $\texttt{tol}$ is the IPM tolerance.

The matrix $Q$ in \eqref{normalequations} is not known explicitly; it is accessible only via matrix-vector products performed using the Radon transform. Hence, to solve the linear system we need to use a matrix free approach; this is done employing conjugate gradient with an appropriate preconditioner.

\subsection{Preconditioner}
The matrix of the system is $Q_1+Q_2+G^{-1}S$, with $Q_1$ given in~\eqref{matrixQ1} and $Q_2$ given in~\eqref{matrixQ2}. $G^{-1}S$ is diagonal, $Q_2$ has a $2\times2$ block structure with diagonal blocks, while the structure of $Q_1$ depends on matrices $(A^L)^TA^L$ and $(A^H)^TA^H$.

Let us analyze an instance where $A^L=A^H=A$. Matrix $A^TA$ is dense in general, but almost all its mass is concentrated in some of its diagonals. Indeed, this can be seen from Figure \ref{fullmatrix}, which shows the magnitude of the elements for the case $N=32$.

In particular, every $N$ diagonals, there is one with larger elements; these elements are almost constant along a specific diagonal, giving matrix $A^TA$ a Toeplitz-like structure. The further away from the diagonal, the smaller the elements become, as can be seen from Figure \ref{diagonaldecay}: here, the mean of the elements along a specific diagonal is plotted against the distance from the main diagonal.

\begin{figure}[h]
\centering
\subfloat[\label{fullmatrix}]{\includegraphics[width=.5\textwidth]{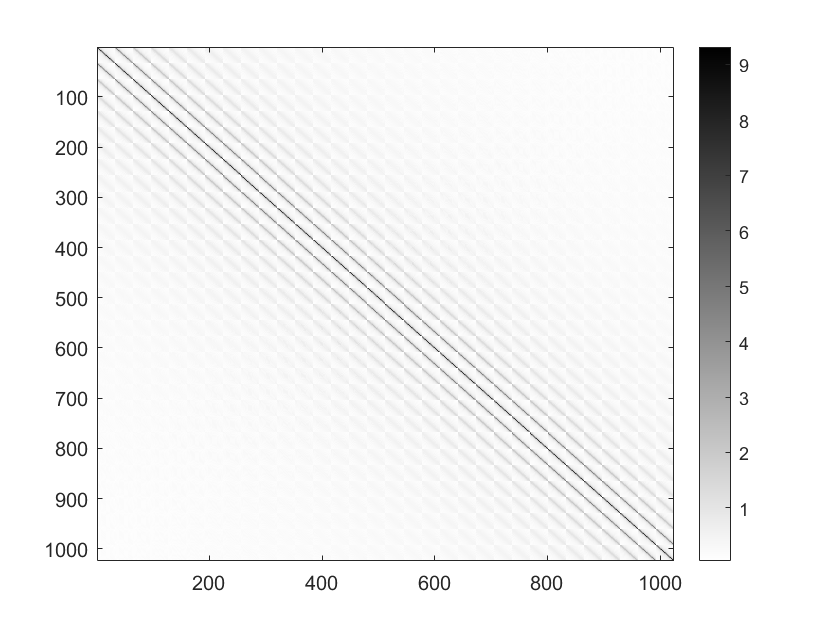}}
\subfloat[\label{diagonaldecay}]{\includegraphics[width=.5\textwidth]{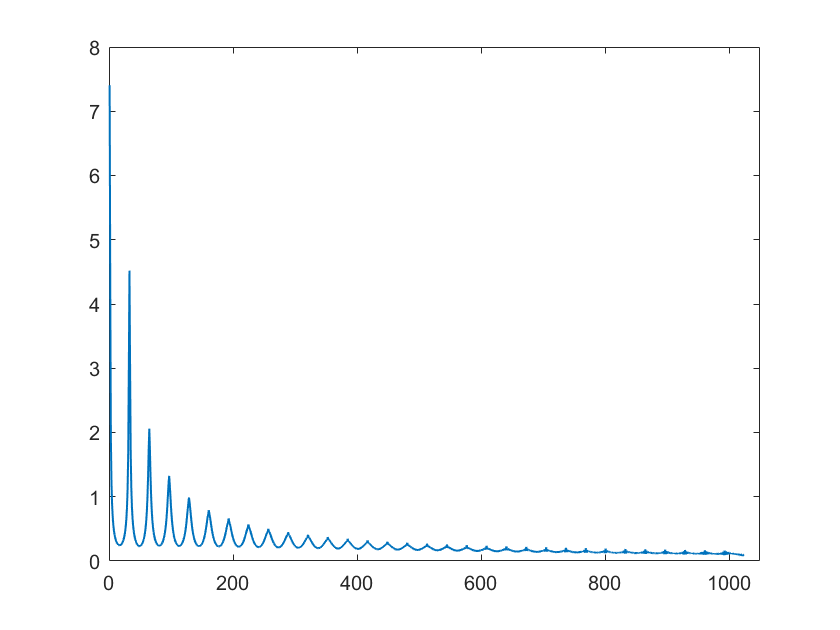}}
\caption{{\bf (a)} Magnitude of the elements of $A^TA$ for $N=32$. {\bf (b)} Magnitude of the mean element along a specific diagonal against the distance from the main diagonal.}
\end{figure}

These facts suggest that it may be possible to approximate matrix $A^TA$ considering only some of the diagonals with large elements. The simplest choice is to use just the main diagonal, in a similar way to what is done in compressed sensing \cite{mf_cs}. Thus, matrix $Q_1$ can be approximated using a $2\times2$ block matrix with diagonal blocks; adding matrix $Q_2$ and $G^{-1}S$ we get the preconditioner:
\begin{equation}
\label{blockdiagprec}
P=\begin{bmatrix}(c_{11}^2+c_{21}^2)\rho I+\alpha I+(G^{-1}S)_1 & (c_{11}c_{12}+c_{21}c_{22})\rho I+\beta I\\ (c_{11}c_{12}+c_{21}c_{22})\rho I+\beta I & (c_{12}^2+c_{22}^2)\rho I+\alpha I+(G^{-1}S)_2\end{bmatrix},
\end{equation}
where we have split the entries of $G^{-1}S$ into the two blocks; $\rho$ is an approximation of the diagonal elements of $A^TA$, obtained through random sampling of this matrix. We will denote the diagonal blocks as $D_{11}$, $D_{12}$ and $D_{22}$ according to their position. This preconditioner is easy to invert: when we need to apply it, we have to solve
\[\begin{bmatrix} D_{11} & D_{12}\\D_{12} & D_{22}\end{bmatrix}\begin{bmatrix}\mathbf{x_1}\\\mathbf{x_2}\end{bmatrix}=\begin{bmatrix}\mathbf{y_1}\\\mathbf{y_2}\end{bmatrix}.\]
This system can be solved forming the Schur complement, which is diagonal:
\[(D_{22}-D_{12}^2D_{11}^{-1})\mathbf{x_2}=\mathbf{y_2}-D_{12}D_{11}^{-1}\mathbf{y_1}\]
and retrieving $\mathbf{x_1}$ from $\mathbf{x_1}=D_{11}^{-1}(\mathbf{y_1}-D_{12}\mathbf{x_2})$.

Notice that most of the terms involved in the preconditioner are constant, while some vary through the IPM iterations, but are immediately available from vectors $\gvec$ and $\svec$. This preconditioner is thus very cheap both to compute and apply.

\begin{remark}
Notice that, if $A^L\ne A^H$, the same preconditioner can be used with a small modification: we just need to approximate both the diagonal of $(A^L)^TA^L$ and $(A^H)^TA^H$ with two different coefficients $\rho^L$ and $\rho^H$.
\end{remark}

In order to use PCG with the proposed preconditioner, we need to show that matrices $Q_1+Q_2+G^{-1}S$ and $P$ are positive definite. 

\begin{lemma}
 If $\alpha\ge\beta$, $M=Q_1+Q_2+G^{-1}S$ and $P$ are symmetric positive definite.
\end{lemma}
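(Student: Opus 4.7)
Symmetry of both matrices is visible by inspection: $Q_1$, $Q_2$ and $P$ are built out of symmetric $2\times 2$ block patterns with equal off-diagonal blocks, and $G^{-1}S$ is diagonal (hence symmetric). So I would dispose of this first with a single sentence.

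For positive definiteness of $M$, my strategy is to reuse Lemma~\ref{lemmaconvex} verbatim: under the hypothesis $\alpha \geq \beta$ we already know that $Q = Q_1 + Q_2$ is positive semidefinite. It then suffices to observe that the IPM maintains $\gvec > 0$ and $\svec > 0$ throughout, so $G^{-1}S = \mathrm{diag}(s_i/g_i)$ is a diagonal matrix with strictly positive entries, hence positive definite. Writing $x^T M x = x^T Q x + x^T (G^{-1}S) x \geq x^T(G^{-1}S)x > 0$ for every nonzero $x$ finishes this case.

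For $P$, the key observation is that $P$ has exactly the same Kronecker-product structure as $Q$ in (\ref{Qkronecker}), only with $(A^L)^T A^L$ and $(A^H)^T A^H$ replaced by the scalar approximations $\rho I$ (and with $G^{-1}S$ added). More precisely, I would rewrite
\[
P \;=\; \begin{bmatrix} c_{11}^2 & c_{11}c_{12}\\ c_{11}c_{12} & c_{12}^2\end{bmatrix}\otimes \rho I \;+\; \begin{bmatrix} c_{21}^2 & c_{21}c_{22}\\ c_{21}c_{22} & c_{22}^2\end{bmatrix}\otimes \rho I \;+\; \begin{bmatrix} \alpha & \beta\\ \beta & \alpha\end{bmatrix}\otimes I \;+\; G^{-1}S.
\]
Each of the first two $2\times 2$ factors is a rank-one outer product $(c_{i1},c_{i2})^T(c_{i1},c_{i2})$, hence PSD; the third has eigenvalues $\alpha\pm\beta$, both nonnegative under $\alpha\geq\beta$; and $\rho > 0$ since it approximates the (strictly positive) diagonal entries of $A^T A$. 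By Lemma~\ref{kroneig} each Kronecker product is PSD, their sum is PSD, and the strictly positive diagonal $G^{-1}S$ again upgrades the sum to positive definite.

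I do not expect any real obstacle here; the argument is essentially a bookkeeping exercise that mirrors the proof of Lemma~\ref{lemmaconvex}. The only points that deserve a short justification are that $G^{-1}S$ is strictly positive along the IPM iterates (a standard interior-point invariant) and that $\rho > 0$ (which follows from $\rho$ being sampled from the diagonal of $A^T A$, whose entries are squared row norms of $A$). Everything else is a direct consequence of what has already been established.
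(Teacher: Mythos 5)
Your proposal is correct and follows essentially the same route as the paper: invoke Lemma~\ref{lemmaconvex} for positive semidefiniteness of $Q$, note that $G^{-1}S$ is diagonal with strictly positive entries, and handle $P$ by rewriting it in the same Kronecker-product form and repeating the argument. The only difference is that you spell out the details (rank-one factors, eigenvalues $\alpha\pm\beta$, positivity of $\rho$) that the paper compresses into ``proceed in the same way.''
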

\begin{proof}
From Lemma \ref{lemmaconvex} we know that if $\alpha\ge\beta$, matrix $Q$ is positive semi-definite. Matrix $G^{-1}S$ is trivially strictly positive definite, hence $M$ is positive definite.

For $P$, write it as
\[P=\begin{bmatrix} c_{11}^2 & c_{11}c_{12}\\c_{11}c_{12} & c_{12}^2\end{bmatrix}\otimes \rho^L I+\begin{bmatrix} c_{21}^2 & c_{21}c_{22}\\c_{21}c_{22} & c_{22}^2\end{bmatrix}\otimes \rho^H I+\begin{bmatrix} \alpha & \beta\\\beta & \alpha\end{bmatrix}\otimes I+G^{-1}S\]
and proceed in the same way.
\end{proof}

Let us define the matrices
\[F=\begin{bmatrix}f_1 & f_2\\f_2 & f_3\end{bmatrix}\quad K=\begin{bmatrix} \alpha & \beta\\\beta & \alpha\end{bmatrix}\]
where $f_1=c_{11}^2+c_{21}^2$, $f_2=c_{11}c_{12}+c_{21}c_{22}$, $f_3=c_{12}^2+c_{22}^2$.
We can now analyze the spectrum of the preconditioned matrix:
\begin{lemma}\label{lemma_bound}
The eigenvalues of the preconditioned matrix $P^{-1}M$, where $P$ is defined in \eqref{blockdiagprec} and $M=Q_1+Q_2+G^{-1}S$, when $A^L=A^H=A$ satisfy
\[\lambda\in\Bigg[\frac{\alpha-\beta}{\rho\Lambda_F+\alpha+\beta},\frac{\sigma_\text{max}^2(A)\Lambda_F+\alpha+\beta}{\rho\lambda_F+\alpha-\beta}\Bigg],\]
where $\Lambda_F\ge\lambda_F$ are the two eigenvalues of matrix $F$.
\end{lemma}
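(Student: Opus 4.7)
The plan is to work directly with the generalized Rayleigh quotient $R(x) = x^T M x / x^T P x$, which characterizes the spectrum of $P^{-1}M$ since both $M$ and $P$ are symmetric positive definite by the preceding lemma. To extract the stated two-sided estimate I would exploit the Kronecker product structure displayed in \eqref{Qkronecker} to derive matching Loewner inequalities for $M$ and for $P$.

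Concretely, abbreviating $D = G^{-1}S$, under the assumption $A^L = A^H = A$ I would write
\[M = F \otimes A^TA + K \otimes I + D,\qquad P = \rho\, F \otimes I + K \otimes I + D.\]
The matrix $F$ is positive semi-definite (it equals $(c_{11},c_{12})^T(c_{11},c_{12}) + (c_{21},c_{22})^T(c_{21},c_{22})$) with eigenvalues $\lambda_F \leq \Lambda_F$; $K$ has eigenvalues $\alpha \pm \beta$; and the eigenvalues of $A^TA$ lie in $[0,\sigma_{\max}^2(A)]$. Using Lemma~\ref{kroneig} together with the fact that the Kronecker product of positive semi-definite matrices is positive semi-definite, one gets $\lambda_F (I\otimes A^TA)\preceq F\otimes A^TA\preceq \Lambda_F \sigma_{\max}^2(A) I$ and the analogous bounds for the other blocks, which assemble into
\[(\alpha - \beta) I + D \;\preceq\; M \;\preceq\; \bigl(\Lambda_F \sigma_{\max}^2(A) + \alpha + \beta\bigr) I + D,\]
\[(\rho\lambda_F + \alpha - \beta) I + D \;\preceq\; P \;\preceq\; (\rho\Lambda_F + \alpha + \beta) I + D.\]

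Next, for $x\neq 0$ set $u=\|x\|^2$ and $v=x^T D x\geq 0$; the Loewner bounds then give
\[\frac{(\alpha - \beta) u + v}{(\rho\Lambda_F + \alpha + \beta) u + v} \;\leq\; R(x) \;\leq\; \frac{(\Lambda_F \sigma_{\max}^2(A) + \alpha + \beta) u + v}{(\rho\lambda_F + \alpha - \beta) u + v}.\]
Each side has the shape $(au+v)/(bu+v)$, whose derivative in $v$ equals $(b-a)u/(bu+v)^2$: when $a<b$ this function increases from $a/b$ at $v=0$ toward $1$, and when $a>b$ it decreases from $a/b$ toward $1$. Applying this monotonicity to the lower bound (clearly $\alpha-\beta<\rho\Lambda_F+\alpha+\beta$) and to the upper bound (where $\Lambda_F\sigma_{\max}^2(A)+\alpha+\beta\geq \rho\lambda_F+\alpha-\beta$ because $\rho$ approximates a diagonal entry $\|Ae_i\|^2\leq \sigma_{\max}^2(A)$) recovers exactly the interval in the claim.

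The main obstacle I anticipate is that $D$ appears identically in both $M$ and $P$, so one cannot simply combine separate two-sided estimates of $M$ and $P$. The cure is the monotonicity observation above: once the non-$D$ parts of $M$ and $P$ are squeezed between multiples of the identity, the common summand $D$ behaves correctly in the ratio and the extremal values of $R(x)$ are approached in the limit $v/u \to 0$. A secondary technical point I would verify is the inequality $\rho\leq \sigma_{\max}^2(A)$, which guarantees $a>b$ in the upper bound and thus ensures that the supremum is given by $a/b$ rather than by the trivial value $1$.
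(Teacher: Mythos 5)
Your proposal is correct and follows essentially the same route as the paper's proof: both bound the generalized Rayleigh quotient $\vvec^TM\vvec/\vvec^TP\vvec$ term by term using the Kronecker eigenvalue lemma, and both dispose of the common summand $G^{-1}S$ via the monotonicity of $(a+c)/(b+c)$ in $c$ (the paper states this as the equivalence $\frac{A+C}{B+C}\ge\frac{A}{B}\Leftrightarrow B\ge A$). The only cosmetic difference is that you verify $F\succeq 0$ by exhibiting it as a sum of two rank-one outer products, whereas the paper checks $\det F=(c_{11}c_{22}-c_{12}c_{21})^2\ge 0$.
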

\begin{proof}
We want to study the generalized eigenvalue problem $M\vvec=\lambda P\vvec$, where 
\[M=F\otimes A^TA+K\otimes I+G^{-1}S,\]
\[P=F\otimes\rho I+K\otimes I+G^{-1}S.\]
Let us fix $\|\vvec\|=1$. The eigenvalues can be expressed as
\[\lambda=\frac{\vvec^TM\vvec}{\vvec^TP\vvec}=\frac{\vvec^T(F\otimes A^TA)\vvec+\vvec^T(K\otimes I)\vvec+\vvec^T(G^{-1}S)\vvec}{\vvec^T(F\otimes\rho I)\vvec+\vvec^T(K\otimes I)\vvec+\vvec^T(G^{-1}S)\vvec}.\]
Let us call the eigenvalues of matrix $F$ as $\Lambda_F>\lambda_F\ge0$, where the last inequality follows from
\begin{align}
f_1f_3-f_2^2&=(c_{11}^2+c_{21}^2)(c_{12}^2+c_{22}^2)-(c_{11}c_{12}+c_{21}c_{22})^2\notag\\
&=c_{11}^2c_{22}^2+c_{21}^2c_{12}^2-2(c_{11}c_{22})(c_{12}c_{21})\notag\\
&=(c_{11}c_{22}-c_{12}c_{21})^2\ge0\notag.
\end{align}
The eigenvalues of $K$ are $\alpha\pm\beta$ and under the assumption $\alpha\ge\beta$, we are sure that this matrix is positive semidefinite.

Using Lemma \ref{kroneig}, we can say that:
\begin{align}
\vvec^T(K\otimes I)\vvec&\in[\alpha-\beta,\alpha+\beta],\notag\\
\vvec^T(F\otimes\rho I)\vvec &\in[\rho\lambda_F,\rho\Lambda_F],\notag\\
\vvec^T(F\otimes A^TA)\vvec&\in[0,\Lambda_F\sigma_\text{max}^2(A)].\notag
\end{align}
Therefore
\begin{equation}\label{lambdaboundupper}
\lambda\le\frac{\sigma_\text{max}^2(A)\Lambda_F+\alpha+\beta+\vvec^T(G^{-1}S)\vvec}{\rho \lambda_F+\alpha-\beta+\vvec^T(G^{-1}S)\vvec},\end{equation}
\begin{equation}\label{lambdaboundlower}
\lambda\ge\frac{\alpha-\beta+\vvec^T(G^{-1}S)\vvec}{\rho\Lambda_F+\alpha+\beta+\vvec^T(G^{-1}S)\vvec}.\end{equation}
Recall the following result: if $A, B, C>0$ then
\[\frac{A+C}{B+C}\ge\frac{A}{B} \Leftrightarrow B\ge A.\]
It is clear that $\rho\Lambda_F+\alpha+\beta\ge\alpha-\beta$ and that $\sigma_\text{max}^2(A)\Lambda_F+\alpha+\beta\ge\rho\lambda_F+\alpha-\beta$, since $\rho$ is the mean eigenvalue of $A^TA$ while $\sigma_\text{max}^2(A)$ the maximum. Thus
\[\lambda\in\Bigg[\frac{\alpha-\beta}{\rho\Lambda_F+\alpha+\beta},\frac{\sigma_\text{max}^2(A)\Lambda_F+\alpha+\beta}{\rho\lambda_F+\alpha-\beta}\Bigg].\]
\end{proof}
\begin{remark}
\label{remark_cg}
Both these bounds do not depend on the IPM iteration. The lower bound depends only on $\alpha$, $\beta$, the coefficients $c_{ij}$ and $\rho$, which does not depend on $N$; hence the lower bound does not depend on $N$. The upper bound, instead, grows as $N$ increases, since the term $\sigma_\text{max}^2(A)$ depends on $N$. Thus, the spectral properties of the preconditioned matrix and the performance of the PCG may deteriorate as N grows.
\end{remark}

A similar result holds in the case $A^L\ne A^H$:
\begin{lemma}
The eigenvalues of the preconditioned matrix $P^{-1}M$, with $A^L\ne A^H$, satisfy
\[\lambda\in\Bigg[\frac{\alpha-\beta}{\Lambda_\rho+\alpha+\beta},\frac{\sigma_\text{max}^2(A^L)\Lambda_{F_L}+\sigma_\text{max}^2(A^H)\Lambda_{F_H}+\alpha+\beta}{\lambda_\rho+\alpha-\beta}\Bigg],\]
where $\lambda_\rho$, $\Lambda_\rho$, $\Lambda_{F_L}$ and $\Lambda_{F_H}$ are defined below.
\end{lemma}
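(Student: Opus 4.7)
The plan is to mimic the proof of Lemma \ref{lemma_bound} almost verbatim, but first I need to identify the correct generalizations of $\rho$, $F$, $\lambda_F$ and $\Lambda_F$ for the two-operator setting. Writing
\[
M=F_L\otimes (A^L)^TA^L+F_H\otimes (A^H)^TA^H+K\otimes I+G^{-1}S,
\]
where $F_L=\begin{bmatrix}c_{11}^2 & c_{11}c_{12}\\ c_{11}c_{12} & c_{12}^2\end{bmatrix}$ and $F_H=\begin{bmatrix}c_{21}^2 & c_{21}c_{22}\\ c_{21}c_{22} & c_{22}^2\end{bmatrix}$, the analogous preconditioner factors as
\[
P=F_L\otimes \rho^L I+F_H\otimes \rho^H I+K\otimes I+G^{-1}S=(\rho^L F_L+\rho^H F_H)\otimes I+K\otimes I+G^{-1}S.
\]
Thus I would define $R=\rho^L F_L+\rho^H F_H$ and let $\lambda_\rho\le \Lambda_\rho$ be its two eigenvalues, and let $\Lambda_{F_L}=c_{11}^2+c_{12}^2$, $\Lambda_{F_H}=c_{21}^2+c_{22}^2$ be the (nonzero) eigenvalues of the rank-one matrices $F_L$ and $F_H$ (both have determinant zero, so the other eigenvalue is $0$).

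Next I would write the Rayleigh quotient
\[
\lambda=\frac{\vvec^T(F_L\otimes (A^L)^TA^L)\vvec+\vvec^T(F_H\otimes (A^H)^TA^H)\vvec+\vvec^T(K\otimes I)\vvec+\vvec^TG^{-1}S\vvec}{\vvec^T(R\otimes I)\vvec+\vvec^T(K\otimes I)\vvec+\vvec^TG^{-1}S\vvec}
\]
for $\|\vvec\|=1$. Lemma \ref{kroneig} gives the term-by-term bounds
\[
\vvec^T(F_L\otimes (A^L)^TA^L)\vvec\in[0,\Lambda_{F_L}\sigma_{\max}^2(A^L)],
\]
\[
\vvec^T(F_H\otimes (A^H)^TA^H)\vvec\in[0,\Lambda_{F_H}\sigma_{\max}^2(A^H)],
\]
\[
\vvec^T(R\otimes I)\vvec\in[\lambda_\rho,\Lambda_\rho],\qquad \vvec^T(K\otimes I)\vvec\in[\alpha-\beta,\alpha+\beta],
\]
and $\vvec^TG^{-1}S\vvec\ge 0$. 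Plugging these into the Rayleigh quotient yields the preliminary upper and lower bounds with the $\vvec^TG^{-1}S\vvec$ term still present, exactly as in \eqref{lambdaboundupper}--\eqref{lambdaboundlower}.

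The final step is to eliminate the $G^{-1}S$ contribution using the elementary identity $\frac{A+C}{B+C}\le \frac{A}{B}$ when $B\le A$ and $C>0$ (and the reverse inequality otherwise). For the lower bound one needs $\Lambda_\rho+\alpha+\beta\ge \alpha-\beta$, which is obvious; for the upper bound one needs $\sigma_{\max}^2(A^L)\Lambda_{F_L}+\sigma_{\max}^2(A^H)\Lambda_{F_H}+\alpha+\beta\ge \lambda_\rho+\alpha-\beta$, which holds since $\rho^L,\rho^H$ are averages of eigenvalues while the $\sigma_{\max}^2$ terms are the maxima, and $\alpha+\beta\ge \alpha-\beta$. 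The main obstacle, and really the only nontrivial bookkeeping, is choosing the right packaging: observing that $F_L\otimes \rho^L I+F_H\otimes \rho^H I$ collapses into $(\rho^L F_L+\rho^H F_H)\otimes I$ is what keeps the denominator a clean Kronecker product, so that the single pair $(\lambda_\rho,\Lambda_\rho)$ carries all the preconditioner information and the argument reduces to the one-operator case with the two $A$-dependent numerator terms simply added.
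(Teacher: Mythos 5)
Your proposal is correct and follows essentially the same route as the paper: the same decomposition of $M$ and $P$ into Kronecker products, the collapse of $F_L\otimes\rho^L I+F_H\otimes\rho^H I$ into $(\rho^L F_L+\rho^H F_H)\otimes I$ with $\lambda_\rho,\Lambda_\rho$ its eigenvalues, the same term-by-term Rayleigh-quotient bounds, and the same monotonicity trick to drop the $\vvec^TG^{-1}S\vvec$ terms. The only addition is your explicit observation that $F_L$ and $F_H$ are rank one so $\Lambda_{F_L}=c_{11}^2+c_{12}^2$ and $\Lambda_{F_H}=c_{21}^2+c_{22}^2$, which the paper leaves implicit.
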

\begin{proof}
In this case, the eigenvalue satisfies
{\small\[\lambda=\frac{\vvec^T(F_L\otimes(A^L)^TA^L)\vvec+\vvec^T(F_H\otimes(A^H)^TA^H)\vvec+\vvec^T(K\otimes I)\vvec+\vvec^T(G^{-1}S)\vvec}{\vvec^T((\rho_L F_L+\rho_H F_H)\otimes I)\vvec+\vvec^T(K\otimes I)\vvec+\vvec^T(G^{-1}S)\vvec},\]}
where
\[F_L=\begin{bmatrix} c_{11}^2 & c_{11}c_{12}\\c_{11}c_{12} & c_{12}^2\end{bmatrix},\quad F_H=\begin{bmatrix} c_{21}^2 & c_{21}c_{22}\\c_{21}c_{22} & c_{22}^2\end{bmatrix}.\]
As before, fix $\|\vvec\|=1$; we can say that
\begin{align} \vvec^T(F_L\otimes(A^L)^TA^L)\vvec&\in[0,\Lambda_{F_L}\sigma_\text{max}^2(A^L)],\notag\\
\vvec^T(F_H\otimes(A^H)^TA^H)\vvec&\in[0,\Lambda_{F_H}\sigma_\text{max}^2(A^H)],\notag\\
\vvec^T((\rho_L F_L+\rho_H F_H)\otimes I)\vvec&\in[\lambda_\rho,\Lambda_\rho]\notag,\end{align}
where we have defined
\[\lambda_\rho=\lambda_\text{min}(\rho_L F_L+\rho_H F_H),\quad\Lambda_\rho=\lambda_\text{max}(\rho_L F_L+\rho_H F_H).\]
Therefore
\[\lambda\le\frac{\sigma_\text{max}^2(A^L)\Lambda_{F_L}+\sigma_\text{max}^2(A^H)\Lambda_{F_H}+\alpha+\beta+\vvec^T(G^{-1}S)\vvec}{\lambda_\rho+\alpha-\beta+\vvec^T(G^{-1}S)\vvec},\]
\[\lambda\ge\frac{\alpha-\beta+\vvec^T(G^{-1}S)\vvec}{\Lambda_\rho+\alpha+\beta+\vvec^T(G^{-1}S)\vvec}.\]
In the same way as before, the final bound becomes
\[\lambda\in\Bigg[\frac{\alpha-\beta}{\Lambda_\rho+\alpha+\beta},\frac{\sigma_\text{max}^2(A^L)\Lambda_{F_L}+\sigma_\text{max}^2(A^H)\Lambda_{F_H}+\alpha+\beta}{\lambda_\rho+\alpha-\beta}\Bigg].\]
\end{proof}

\section{The comparison method: Joint Total Variation (JTV)}\label{sec:JTV}

We have chosen Joint Total Variation (JTV) as a benchmark method for our new Inner Product (IP) regularization method. JTV is a multi-channel joint reconstruction approach where all the unknown images are reconstructed simultaneously by solving one combined inverse problem. Basic (non-joint) TV as a regularizer favors piecewise constant images where the boundary curves separating different constant areas are as short as possible. JTV also promotes piecewise-constantness in each image channel, but additionally {\it requiring that the jump curves in all channels coincide.}


There are many slightly different formulations of the JTV functional in the literature; see 
\cite{yang2009fast,blomgren1998color,ehrhardt2020multi,chung2010numerical,danad2015new}. Total Generalized Variation (TGV) has been used for multi-channel electron microscopy tomography in \cite{huber2019total}.

Let us explain the JTV model used here.

Let $\mathbf{f}^{\,\square}$ be a $N\times N$ matrix, and denote its vertical vector form by $\mathbf{f}\in\R^{N^2}$. Define two $N^2{\times}N^2$ matrices: $L_H$ implementing horizontal differences and $L_V$ vertical differences. The matrix $L_H$ is determined by the formula 
\begin{eqnarray}
  (L_H\mathbf{f})_{\ell} &=& \mathbf{f}^{\,\square}_{k,m+1}-\mathbf{f}^{\,\square}_{k,m},\qquad 1\leq \ell \leq N^2,
\end{eqnarray}
where the row index $k$ and column index $m$ are defined as follows. We write the integer $\ell-1$ in the form
$$
  \ell-1 = (m-1)N + (k-1),
$$
where $0\leq (m-1) < N$ is the quotient and $0\leq (k-1) <N$ is the remainder. Also, we use the convention that $\mathbf{f}^{\,\square}_{k,N+1}=0$ for all $1\leq k\leq N$.
The matrix $L_V$ is determined similarly by the formula 
\begin{eqnarray}
  (L_V\mathbf{f})_{\ell} &=& \mathbf{f}^{\,\square}_{k+1,m}-\mathbf{f}^{\,\square}_{k,m}
\end{eqnarray}
with the convention that $\mathbf{f}^{\,\square}_{N+1,m}=0$ for all $1\leq m\leq N$.

We use JTV for vectors of the form
$$
\gvec = \left[\!\!\begin{array}{l}\gvec^{(1)}\\\gvec^{(2)}\end{array}\!\!\right],
$$
including a non-negativity constraint:
\begin{equation}
\widetilde{\gvec}_{\gamma} 
= \argmin_{\gvec^{(j)}\geq 0}
\left\{\|\mvec-\A \gvec\|_2^2 + \gamma\Regul(\gvec) \right\},
\end{equation} 
where $\gamma>0$ is the regularization parameter. 
The discrete JTV regularizer is 
\begin{eqnarray}\label{discreteJTV}
    \Regul(\gvec) 
    &=&
    \sum_{\ell=1}^{N^2}\left( 
 \left|(L_H\gvec^{(1)})_{\ell}\right|+ \left|(L_V\gvec^{(1)})_{\ell}\right|+
\left|(L_H\gvec^{(2)})_{\ell}\right|+ \left|(L_V\gvec^{(2)})_{\ell}\right|\right).
\end{eqnarray}
In practice we deploy the classical trick of replacing the absolute values in (\ref{discreteJTV}) with a rounded approximate absolute value function $|x|_\kappa =\sqrt{x^2+\kappa}$ with a small parameter $\kappa>0$. This makes the objective functional smooth, allowing straightforward gradient-based minimization. 








\section{Materials and methods}
\label{sec:methods}

We need to find a way to assess the quality of our new method described in the introduction \eqref{variational_cont}. In the spirit of applied inverse problems, we try to evaluate how well the end-users of the algorithm are getting what they want. The main goal is to recover the location of the two different materials in the target, assuming that the materials do not mix. We compare the outcome of our method with the corresponding results from JTV approach to find out if we have reached any improvement.

The new IP method approaches the problem by explicitly representing the two materials as two separate images 
$\gvec^{(1)}$ and $\gvec^{(2)}$ in (\ref{unifiedsystem}), taking into account the energy-dependence of the attenuation coefficients of the materials. The regularized reconstruction determined by (\ref{variational_cont}) gives correspondingly two material images
$$
\widetilde{\gvec}_{\alpha,\beta} = \left[\!\!\begin{array}{l}\widetilde{\gvec}^{(1)}_{\alpha,\beta}\\\\
\widetilde{\gvec}^{(2)}_{\alpha,\beta}\end{array}\!\!\right].
$$
For a known test target we can then check how well the images $\widetilde{\gvec}^{(1)}_{\alpha,\beta}$ and $
\widetilde{\gvec}^{(2)}_{\alpha,\beta}$ match the true locations of the materials. JTV gives us correspondingly two separate material images, which makes comparison straightforward.

We will approximate the quality of our reconstruction method with classical error measures and with pixel error measure, which describes the separation of the materials. We calculate the classical $L_2$-error: 
$$
L_2\text{-error} = \frac{\text{norm}(\text{phantom}(:)-\text{reconstruction}(:))}{\text{norm(phantom(:))}},
$$
 the structural similarity index (SSIM) \cite{wang2004image} and Haar wavelet-based perceptual similarity index (HaarPSI) \cite{reisenhofer2018haar} for both approaches, (JTV and IP) and for both of the materials separately. We calculate these quality measures by comparing the original phantoms with the resulting reconstructions. Same hold for calculating the pixel error. The error calculation protocol needs the following two phases:
\begin{itemize}
    \item[Phase 1.]
    {\bf Choice of optimal regularization parameters.} To allow for a fair comparison between JTV and IP, we need an objective methodology for choosing the regularization parameters. For JTV we look for $\alpha>0$ for which the geometric mean of the relative $L_2$ -errors of the two material images, $E_{\mbox{mean}} = \sqrt{E_1  E_2}$, attains its minimum. For IP method we let $\alpha>0$ vary and take $\beta = 0.8\cdot\alpha$. Then we find the $\alpha$ that minimizes $E_{\mbox{mean}}$.
    \item[Phase 2.]
    {\bf Material characterization error.} The final quality measure for both methods is how well they identify the correct material in each pixel. We assume that we know {\it a priori} the relative amount of each of the two materials. In other words, we know how many pixels should have value one in a material image; the rest of the pixels must be zero. We segment the reconstruction images of both JTV and IP methods by choosing the threshold that yields a binary image with the correct (or most correct) number of pixels with value one.
    
    For example, with HY phantom we have two separate material images, containing only black or white pixels. We use resolution 128$\cdot$128 and can calculate the relative amount of white pixels in the material image 1: 
    $$
    \text{white pixels} = \frac{\text{nnz}(\text{material 1})}{N\cdot N},
    $$
    where nnz-function calculates the number of non-zero pixels. Now when we know the proportion of white pixels, we can define a value of a threshold (tr) so that it sets correct amount of white pixels:
    $$
    \text{Segmented material 1}(\text{reconstruction 1} > tr) = 1;
    \label{eq:segmentation1}
    $$
    and a correct amount of black pixels:
        $$
    \text{Segmented material 1}(\text{reconstruction 1} < tr) = 0;
    $$
    \label{eq:segmentation2}
    in our segmented material image.
    
\end{itemize}

\subsection{Computational parameters in the measurement model}

There were several common settings which we used in the numerical simulations implemented with inner product (IP) method and joint total variation (JTV) method.

 The size of reconstructed images was fixed to be 128x128 pixels in both methods. This quite small resolution was selected for practical reasons: to save memory space and computation time. 
 
 It was important to avoid the inverse crime in the computations, so we added noise and modelling error to the simulations. The relative noise level in both simulations was 0.01. It was added to the measured sinogram by calculating \texttt{noiselevel*max(abs(m(:)))*randn(size(m))}, where m was the simulated sinogram. Besides adding random noise, we avoided the inverse crime by rotating the object 45 degrees, so that the orientation of X-rays changes and interpolation causes small (about 1-2\%) modelling error.

The number of angles in tomographic simulations was chosen to be sparse. Measurement angles were selected between 0 and 180 degrees with constant intervals. (Measurement geometry A, See Figure \ref{fig:arrangement} as an example of imaging geometry). We used parallel-beam geometry and 65 angles for tomographic projections in all of the measurements. 

Attenuation coefficients for high and low energies where selected from NIST-database to simulate the materials of PVC (polyvinyl chloride) and iodine when imaged with 30 kV or 50 kV. Selected values are c11: 1.491 (PVC low energy), c12: 8.561 (Iodine low energy), c21: 0.456 (PVC  high energy), c22: 12.32 (Iodine high energy). See table \ref{tab:att_coeff} for clarity.

\begin{table}[H]
    \centering
    \begin{tabular}{c|c|c}
    Attenuation coefficient & Simulated material & Tube voltage\\ \hline
        1.491  &  PVC    & 30 kV\\
        8.561  &  Iodine & 30 kV\\
        0.456  &  PVC    & 50 kV\\
        12.32  &  Iodine & 50 kV\\
    \end{tabular}
    \caption{Attenuation coefficients selected for simulating the two different materials with low and high tube energies.}
    \label{tab:att_coeff}
\end{table}

\subsection{Phantoms}

We used four different phantoms in all our simulations. First phantom has letters "H" and "Y" in it. This HY phantom represents a piece of plastic, where the letters have been hollowed out, and the holes are filled with iodine. Second "Bone" phantom is an image of a cross section of a bone with bone marrow. Third phantom is a pattern resembling an ancient Egyptian document written in hieroglyph and named as "Egypt" phantom and the last one "Circuit" is an image of an electric circuit.

These four different phantoms pose various challenges to our reconstruction algorithms. We start with a fairly simple HY phantom and gradually add details to raise the standard, so that with the last Circuit phantom we have already many small structures, which are difficult for the algorithms to catch, especially now when data is sparsely collected. For clarity, we show in larger images the results of the more detailed phantoms (Egypt and Circuit).

\begin{figure}[H]
    \centering
    \hspace{0.5cm} \textbf{HY}\hspace{1.7cm} \textbf{Bone}\hspace{1.9cm} \textbf{Egypt}\hspace{1.9cm} \textbf{Circuit}\hspace{0.3cm}
        \includegraphics[width=12cm]{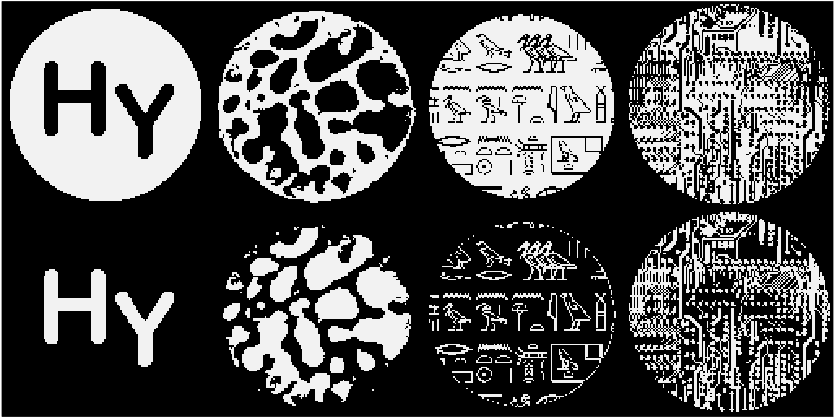}
        \caption{Original phantoms. The four different phantoms which we used in our simulations are shown here in the resolution we actually used. First row shows material one (PVC in these simulations) and second row shows material 2 (iodine in these simulations). These images show the perfect separation of the materials into their own images, so they serve us as a ground truth, where the results of the other methods can be compared.}
               
    \label{fig:phantoms}
\end{figure} 

\section{Results}
\label{sec:results}

In this section we show first reconstruction results and  material decomposition results for our IP method and for standard JTV approach for comparison. We estimate the quality of reconstructions with classical error measures and with material characterization error (misclassified pixels) as described in Section \ref{sec:methods}. We have collected these numerical measures in Table \ref{tab:error_table}. We show also numerical results for assessing the quality of the preconditioner of the IPM method.

\subsection{Reconstruction results of the IP method}

 In IP method we apply Tikhonov regularization and use the inner product  $(g^{(1)})^T g^os$
 We have two regularization parameters $\alpha$ and $\beta$ in this method. The regularization parameter $\alpha$ is chosen by minimizing the mean L2 error in the resulting reconstructions. Parameter $\beta$ adjusts the new regularization term and controls the point-wise separation of the two materials. We fixed $\beta = 0.8\cdot\alpha$ in these simulations. It is important that we always have $\alpha > \beta$. Such a choice prevents from the problem getting non-convex,  which could lead to lengthy computations and an instability of the solution. The stopping criterion for the method is to check the normalized dual residual and the complementarity (duality) gap, see \eqref{IPMstop}. In all our computations the tolerance was set to 1e-8. 

The reconstructions made with IP method for four different phantoms (HY, Bone, Egypt and Circuit), are always in the second column in the following result images  \ref{fig:Hy_recos}, \ref{fig:Bone_recos}, \ref{fig:reconstruction_results_Egypt}, \ref{fig:reconstruction_results_Circuit}. In the first column we have JTV reconstructions for comparison and in the right most column the ground truth. All resulting images have been scaled so that they are in the same scale and thus comparable.

     \begin{figure}[!ht]
\centering
    \begin{subfigure}{0.45\linewidth}
        \hspace{0.2cm} \textbf{JTV}\hspace{0.8cm} \textbf{IP}\hspace{0.3cm} \textbf{\scriptsize Ground\hspace{0.1cm}truth}\hspace{5cm}
        \includegraphics[width=\linewidth]{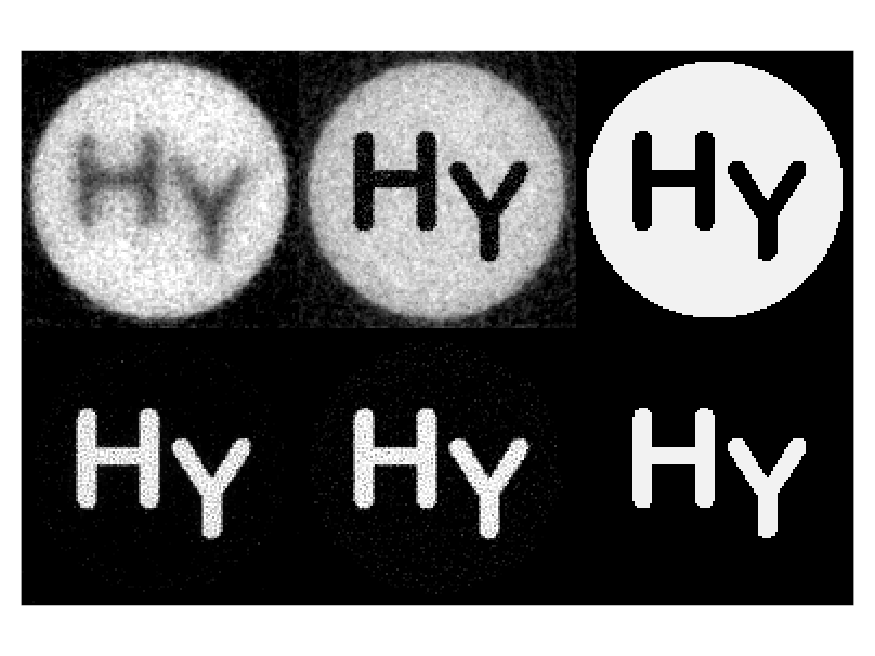}
    \caption{HY}
    \label{fig:Hy_recos}
    \end{subfigure}
\hfil
    \begin{subfigure}{0.45\linewidth}
     \hspace{0.2cm} \textbf{JTV}\hspace{0.8cm} \textbf{IP}\hspace{0.3cm} \textbf{\scriptsize Ground\hspace{0.1cm}truth}\hspace{1.9cm}
        \includegraphics[width=\linewidth]{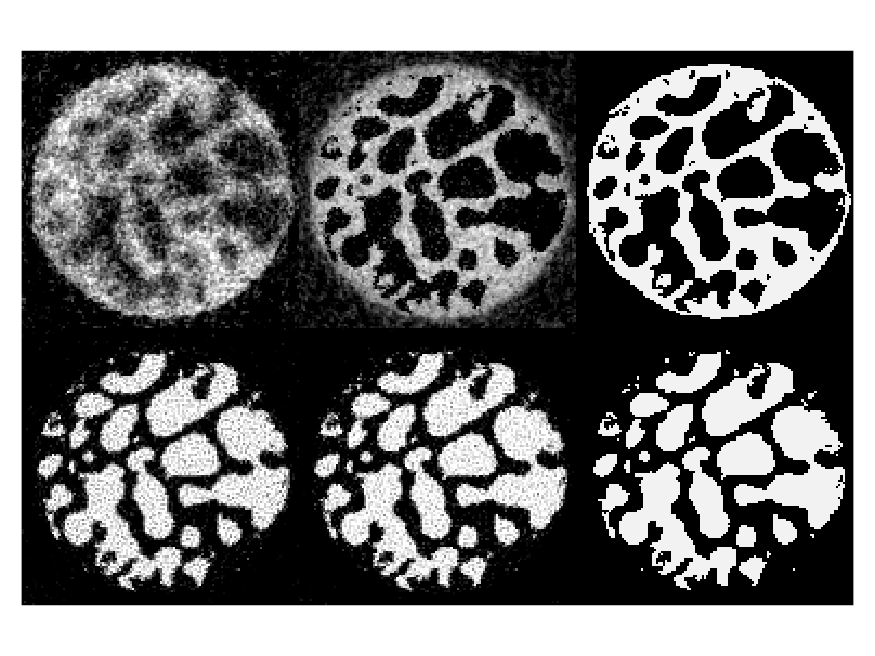}
    \caption{Bone}
       \label{fig:Bone_recos}
    \end{subfigure}
    
\caption{Reconstruction results with JTV and IP regularizations for HY and Bone phantoms. The first row represents material 1 and the second row represents material 2. First column shows JTV reconstructions, second column shows IP-method reconstructions and third column is the ground truth.}
    \label{fig:HY_and_bone_recos}
    \end{figure}

\begin{figure}[H]
    \centering
    \hspace{0.4cm} \textbf{JTV}\hspace{2.5cm} \textbf{IP}\hspace{1.5cm} \textbf{Ground truth}\hspace{1.9cm}
        \includegraphics[width=12cm]{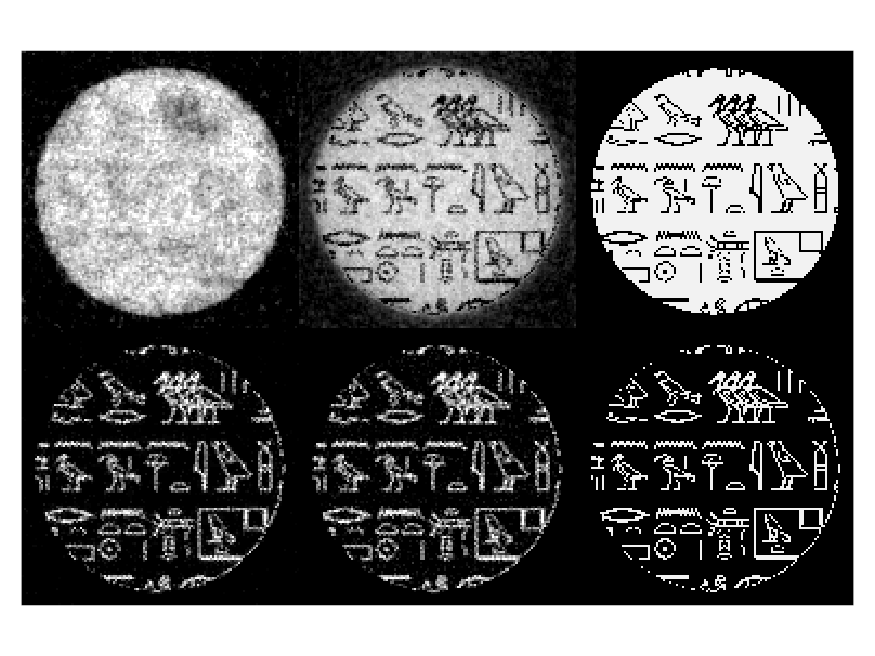}
        \caption{Reconstruction results with JTV and IP regularizations for Egypt phantom. The first row represents material 1 and the second row represents material 2. First column shows JTV reconstructions, second column shows IP reconstructions and third column is the ground truth.}
        
    \label{fig:reconstruction_results_Egypt}
\end{figure} 

\begin{figure}[H]
    \centering

         \hspace{0.4cm} \textbf{JTV}\hspace{2.5cm} \textbf{IP}\hspace{1.5cm} \textbf{Ground truth}\hspace{1.9cm}
        \includegraphics[width=12cm]{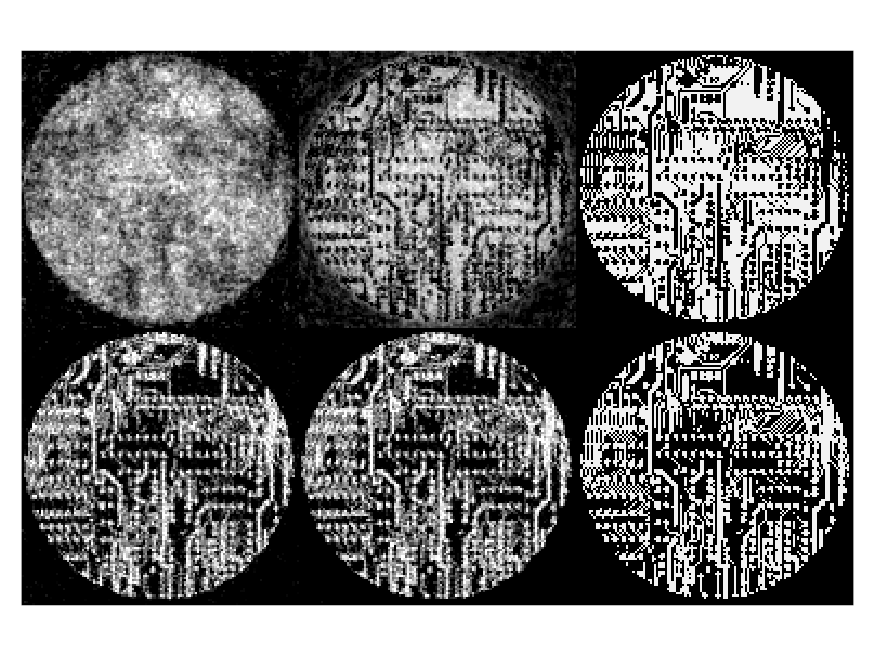}      
        \caption{Reconstruction results with JTV and IP regularizations for electric circuit phantom. The first row represents material 1 and the second row represents material 2. First column shows JTV reconstructions, second column shows IP reconstructions and third column is the ground truth.}
    \label{fig:reconstruction_results_Circuit}
\end{figure} 
\subsection{Reconstruction results with JTV}
 In JTV we use standard Tikhonov regularization for the two image system. Hence we have only one adjustable regularization parameter, $\gamma$. Parameter $\gamma$ was chosen so that it minimizes the mean L2 error in the resulting reconstructions and the value we selected for all cases was $\gamma = 0.001$. The stopping criterion for iterations in JTV was the point where no more progress was made. For all our examples the criterion was achieved in 400 iterations.
 
 The reconstructions made with JTV method for four different phantoms (HY, Bone, Egypt and Circuit) are always in the first column in the reconstruction result images 
 \ref{fig:Hy_recos}, \ref{fig:Bone_recos}, \ref{fig:reconstruction_results_Egypt}, \ref{fig:reconstruction_results_Circuit}. The second column shows IP method reconstructions and the rightmost column shows the actual ground truth.

\begin{table}[H]
\caption{Error calculations for the four phantoms, including the JTV and IP regularizers. Regularization parameters have been adjusted manually to achieve the minimal L2 error. Note that for L2 and {\it misclassifications} a smaller number means better quality, whereas for SSIM and HPSI a greater number means better quality. For JTV we used $\gamma=0.001$. For IP we used $\alpha=150$ and $\beta=120$. For each of the phantoms and fixed quality measure, we have underlined the better result of the two.}
\label{tab:error_table}
\centering
\begin{tabular}{ccccccc}
 Phantom & Method & \phantom{mm}L2\phantom{mm}   & SSIM & HPSI & misclassif.\\
\hline
\\
HY 1 & JTV & 0.30 & 0.23 & 0.21 & 0.05 \\

HY 1 & IP &  \underline{0.27} &  \underline{0.29} & \underline{0.28}  & \underline{0.02} \\
\\
HY 2 & JTV &  \underline{0.27} &  \underline{0.75} & \underline{0.56}  & 0.01\\
HY 2 & IP & 0.28 & 0.60 & 0.53 &0.01 \\
\hline
\\

Bone 1 & JTV & 0.55 & 0.24 & 0.15 & 0.14 \\
Bone 1 & IP & \underline{0.44} & \underline{0.41} & \underline{0.36}  & \underline{0.06} \\
\\              
Bone 2 & JTV & 0.32 & 0.66 & 0.50 & 0.04 \\
Bone 2 & IP & \underline{0.29} & \underline{0.71} & 0.50  & \underline{0.03} \\
\hline
\\
Egypt 1 & JTV & 0.40 &  0.25 & \underline{0.30} & 0.13\\
Egypt 1 & IP & \underline{0.38} & \underline{0.33} & 0.29  & \underline{0.08}\\
\\
Egypt 2 & JTV & 0.62 & 0.69 & 0.56 & 0.06\\
Egypt 2 & IP & \underline{0.61}  & 0.69 & 0.56  & 0.06\\
\hline
\\
Circuit 1 & JTV & 0.62  & 0.17 & \underline{0.30} & 0.28\\
Circuit 1 & IP & \underline{0.56} & \underline{0.32} & 0.28 & \underline{0.18}\\
\\
Circuit 2 & JTV & 0.59  & 0.59 & 0.50 & 0.16\\
Circuit 2 & IP & 0.59 & \underline{0.62} & 0.50 & 0.16\\

\end{tabular}
\end{table}

\subsection{Material decomposition results}

The final quality measure for IP and JTV methods is how well they manage to identify the correct material in each pixel in the reconstructions. Because we work with simulations, we can calculate how many pixels we should have representing material 1 and material 2. With this {\it a priori} knowledge we can adjust the threshold so that it produces the correct number of pixels representing each material. 

The actual ratio of misclassified pixels (divided by the number of all pixels in the image) is listed in the rightmost column of Table \ref{tab:error_table}. We have underlined the better result of the two in the table to make it easier to compare the outcome of the methods.

We show the results of the thresholding also in the following colored segmentation images. Material 1 is represented with yellow color and material 2 with blue color. We hope this makes it easier to qualitatively compare how the methods performed in distinguishing the different materials from each other.

We arranged the colored segmentation images as a grid, where column represents the method and row represents the outcome. The first row in the segmentation result shows both materials in the same image. The second and third row show the materials separately in their own images: Material 1 in the second row and material 2 in the third row. Columns in all images have been organized so that JTV approach is always in the first column, IP regularization is in the second column and the actual ground truth is in the rightmost column. The ground truth represents the ideal situation where the classification of the materials has succeeded perfectly.

     \begin{figure}[!ht]
\centering
    \begin{subfigure}{0.45\linewidth}
        \hspace{0.2cm} \textbf{JTV}\hspace{0.8cm} \textbf{IP}\hspace{0.3cm} \textbf{\scriptsize Ground\hspace{0.1cm}truth}\hspace{5cm}
        \includegraphics[width=\linewidth]{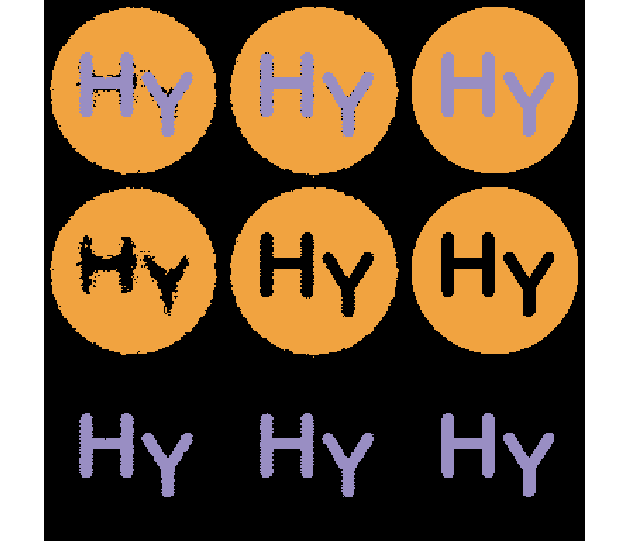}
    \caption{HY}
    \label{fig:seg_results}
    \end{subfigure}
\hfil
    \begin{subfigure}{0.45\linewidth}
     \hspace{0.2cm} \textbf{JTV}\hspace{0.8cm} \textbf{IP}\hspace{0.3cm} \textbf{Ground truth}\hspace{1.9cm}
        \includegraphics[width=\linewidth]{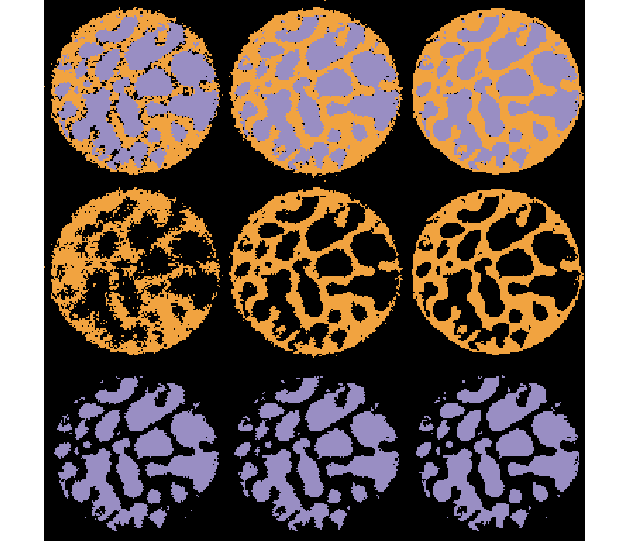}
    \caption{Bone}
       \label{fig:Bone_segmentations}
    \end{subfigure}
    
\caption{Segmentation results for HY and bone phantoms. The first row shows both materials of the phantom together, the second row shows only material 1 and the third row shows only material 2. The first column shows JTV segmentations, the second column shows IP segmentations and the third column is the ground truth.}
    \label{fig:HY_and_bone_segmentations}
    \end{figure}

\begin{figure}
    \centering
       
         \hspace{0.4cm} \textbf{JTV}\hspace{2.5cm} \textbf{IP}\hspace{1.5cm} \textbf{Ground truth}\hspace{1.9cm}
        \includegraphics[width=10cm]{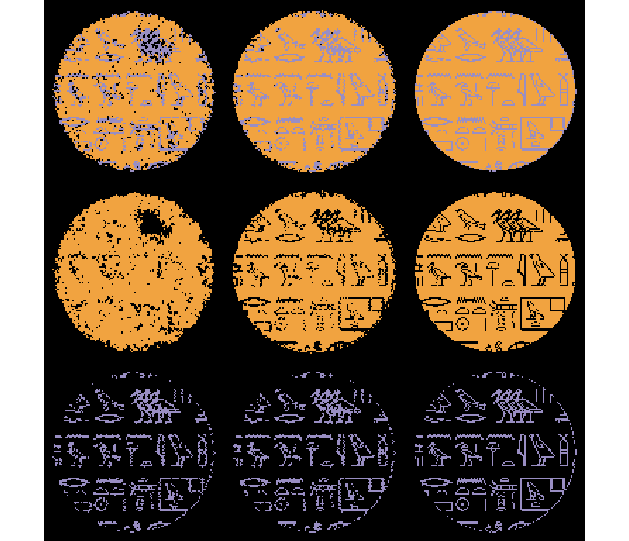}
        \caption{Segmentation results for Egypt phantom. The first row shows both materials of the phantom together, the second row shows only material 1 and the third row shows only material 2. The first column shows JTV segmentations, the second column shows IP segmentations and the third column is the ground truth.}
    \label{fig:segmentation_results_Egypt}
\end{figure} 

\begin{figure}
    \centering
      
         \hspace{0.4cm} \textbf{JTV}\hspace{2.5cm} \textbf{IP}\hspace{1.5cm} \textbf{Ground truth}\hspace{1.9cm}
        \includegraphics[width=10cm]{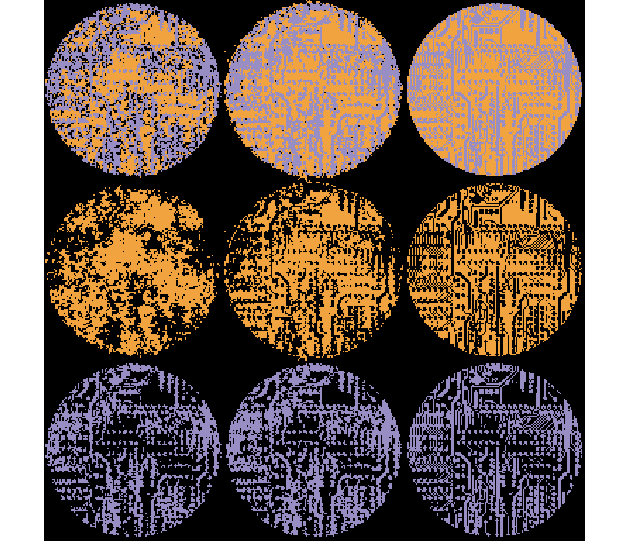}
        \caption{Segmentation results for Circuit phantom. The first row shows both materials of the phantom together, the second row shows only material 1 and the third row shows only material 2. The first column shows JTV segmentations, the second column shows IP segmentations and the third column is the ground truth.}
    \label{fig:segmentation_results_Circuit}
\end{figure} 

\subsection{Numerical effect of preconditioning}

In this section we present the results which provide an insight into the behaviour of optimization technique employed to solve the IP segmentation problem (\ref{variational_Tikhonov}). We briefly discuss the performance of interior point method applied to solve the underlying convex quadratic programming problem and focus on illustrating the behaviour of the preconditioned conjugate gradient algorithm applied to normal equations (\ref{normalequations}) arising in IPM. 

We start by showing in Figure~\ref{spectrumiteration} the eigenvalues of the normal equations, with and without preconditioner (\ref{blockdiagprec}), for the problem with $N=32$. It is clear that the spectrum of the preconditioned matrix is bounded independently of the IPM iteration, which is what we were expecting according to Lemma~\ref{lemma_bound}.





\begin{figure}[H]
\caption{Eigenvalues of the normal equations with and without preconditioner for $N=32$, $\alpha=500$, $\beta=250$}
\label{spectrumiteration}
\centering
\includegraphics[width=.9\textwidth]{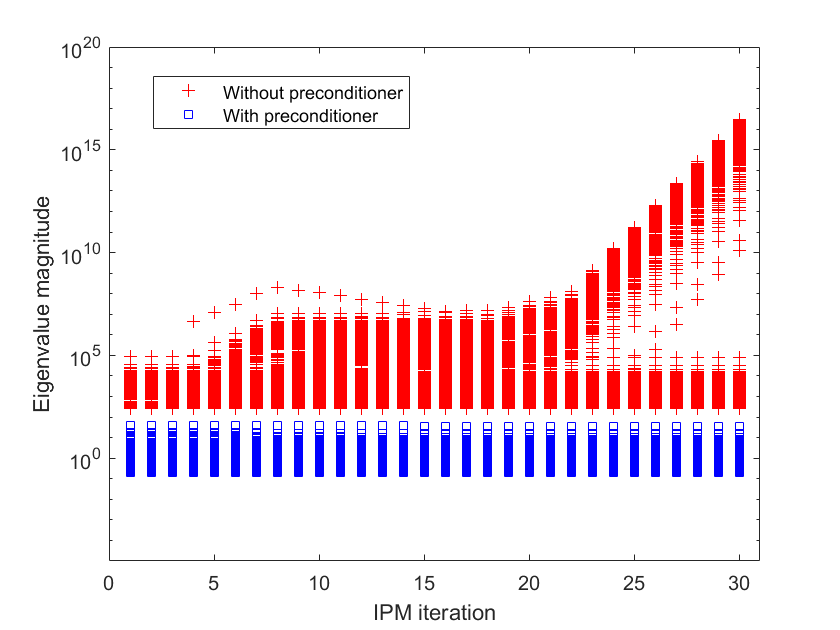}
\end{figure}

Next, we show in Table~\ref{ipm_results} the results in terms of IPM iterations, PCG iterations and computational time, for various values of N. The IPM tolerance in~\eqref{IPMstop} is set to  $10^{-8}$; we employed 3 centrality correctors with a symmetric neighbourhood~\eqref{neighbourhood} with parameter $\gamma=0.2$. The default PCG tolerance is $10^{-6}$, but we also employ an early termination strategy, based on the estimate of the IPM convergence indicators throughout the CG iterations (see \cite{cg_termination} for more details).



\begin{table}[H]
\caption{Results with $\alpha=500$, $\beta=250$.}
\label{ipm_results}
\centering
\begin{tabular}{rr|rrr}
\toprule
$N$ & Dimension & IPM iter & PCG iter & Time (s) \\
\midrule
 32 &   2,048 & 19 & 1,038 &   2.25\\
 64 &   8,192 & 24 & 1,484 &   7.90\\
128 &  32,768 & 25 & 1,986 &  32.69\\
256 & 131,072 & 28 & 2,678 & 157.79\\
512 & 524,288 & 34 & 3,772 & 881.90\\
\bottomrule
\end{tabular}
\end{table}

As we were expecting from Remark~\ref{remark_cg}, we can see that the number of CG iterations per IPM iteration grows slowly as N increases. However, such behaviour is acceptable, and the computational time in the case of $N=512$ is still reasonable.

\subsubsection{Effect of the regularization}
We also show some results that underline the effect of the newly added penalty term~\eqref{materialsep_penalty}. We expect from this regularizer to create a separation in the vectors $\gvec^{(1)}$ and $\gvec^{(2)}$, i.e.\ we expect the scalar product $\gvec^{(1),T}\gvec^{(2)}$ to be pushed close to zero. We performed some tests with different values of $\beta$ and a fixed value $\alpha=500$, in the case $N=64$.

Table \ref{betaeffect} shows the number of elements of the component-wise products of $\gvec^{(1)}$ and $\gvec^{(2)}$ that are smaller than $10^{-6}$, and the average value of the same product, i.e.\ $(\gvec^{(1),T}\gvec^{(2)})/N^2$. We can see that as $\beta$ is increased, the number of small elements grows and the average product decreases, confirming the effect that we expected.

\begin{table}[H]
\caption{Number of small elements and average product of $\gvec^{(1)}$ and $\gvec^{(2)}$ for different values of $\beta$; $\alpha=500$, $N=64$.}
\label{betaeffect}
\centering
\begin{tabular}{rcc}
\toprule
$\beta$ & small elements & $(\gvec^{(1),T}\gvec^{(2)})/N^2$\\
\midrule
 50 & 1056 & 4.86E3\\
100 & 1091 & 4.07E3\\
150 & 1123 & 3.17E3\\
200 & 1161 & 2.36E3\\
250 & 1607 & 1.54E3\\
300 & 2075 & 1.23E3\\
350 & 2210 & 1.07E3\\
400 & 2412 & 0.93E3\\
450 & 2581 & 0.83E3\\
\bottomrule
\end{tabular}
\end{table}

Figure \ref{g1g2product} shows the elements of the component-wise products of $\gvec^{(1)}$ and $\gvec^{(2)}$, sorted according to their magnitude, in the case $\beta=50$ and $\beta=450$. The number of small elements is substantially larger in the latter case, confirming what we expected. 

\begin{figure}[H]
\caption{Magnitude of the elements of the component-wise products of $\gvec^{(1)}$ and $\gvec^{(2)}$.}
\label{g1g2product}
\centering
\includegraphics[width=.7\textwidth]{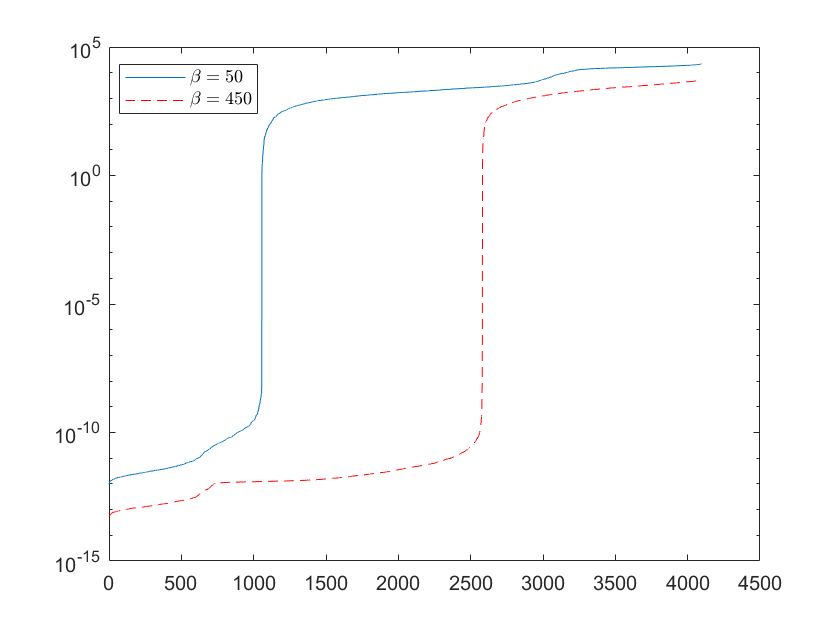}
\end{figure}

\section{Discussion}\label{sec:discussion}

When we compare the color segmentation results achieved with the two approaches JTV and IP, we can easily see that IP delivers a segmentation with fewer misclassified pixels. Hence IP method produces more accurate separation of the materials. The actual ratio of misclassified pixels compared to all pixels is listed in Table \ref{tab:error_table}. This numerical evidence suggests that IP is consistently better in pixel misclassification quality measure which is a crucial quality indicator for the application we have in mind. IP is also a frequent winner 
(although less consistent) for the remaining quality measures. To be precise, JTV is better than IP only in 1 case out of 8 on L2 measure, only in 1 case out of 8 on SSIM and in 3 cases out of 8 on HPSI. 

Furthermore, it seems that JTV always produces visibly worse reconstruction of Material 1 image than that of Material 2. This could probably be alleviated by a different weighting of the gradient components. However, in the comparisons in this paper we used both methods in their basic forms, as both can undoubtedly be improved by tweaking various parameters. 

One such tweak would be a smarter thresholding, taking into account both material reconstructions and the piece of {\it a priori} knowledge that each pixel contains exactly one type of material.

The natural next step is to test the new method with two-dimensional X-ray images recorded of a three-dimensional object, using voxels instead of pixels for computational discretization. 

\bibliographystyle{plain}
\bibliography{mybibliography}

\end{document}